\newcommand{\A}{{\mathcal{A}}}
\newcommand{\br}[3]{{$#1$}$\lower4pt\hbox{$\tp\atop\raise4pt \hbox{$\scriptscriptstyle{#2}$}$} ${$#3$}}
\newcommand{\tw}[3]{{$#1$}${\,\scriptscriptstyle {#2}}\atop\raise9pt\hbox{$\scriptstyle\tp$} ${$#3$}}
\newcommand{\ttps}[2]{{#1}\raise5pt\hbox{$\lower12pt\hbox{$\scriptstyle\tp$}\atop \lower0pt\hbox{$\tilde\;$}$}\raise4.5pt\hbox{${\scriptstyle{#2}}$}}
\newcommand{\st}[1]{\mbox{${\,\scriptscriptstyle {#1}}\atop\raise5.5pt\hbox{$*$}$}}
\newcommand{\rd}[1]{\mbox{${\,\scriptscriptstyle {#1}}\atop\raise5.5pt\hbox{$\bullet$}$}}
\newcommand{\rt}[1]{\otimes_\chi}
\newcommand{\lt}[1]{\mbox{${\,\scriptscriptstyle {#1}}\atop\raise5.5pt\hbox{$\ltimes$}$}}
\newcommand{\btr}{\raise1.2pt\hbox{$\scriptstyle\blacktriangleright$}\hspace{2pt}}
\newcommand{\btl}{\raise1.2pt\hbox{$\scriptstyle\blacktriangleleft$}\hspace{2pt}}
\newcommand{\lcr}{\raise1.0pt \hbox{${\scriptstyle\rightharpoonup}$}}
\newcommand{\rcr}{\raise1.0pt \hbox{${\scriptstyle\leftharpoonup}$}}
\newcommand{\ttp}{{\lower12pt\hbox{$\tp$}\atop \hbox{$\tilde\;$}}}
\newcommand{\id}{\mathrm{id}}
\newcommand{\B}{\mathcal{B}}
\newcommand{\Sg}{\mathfrak{S}}
\newcommand{\Ru}{\mathcal{R}}
\newcommand{\Q}{\mathcal{Q}}
\newcommand{\C}{\mathbb{C}}
\newcommand{\tp}{\otimes}
\newcommand{\tht}{\theta}
\newcommand{\U}{U}
\newcommand{\ve}{\varepsilon}
\newcommand{\dt}{\delta}
\newcommand{\op}{\oplus}
\newcommand{\la}{\lambda}
\newcommand{\srm}{\mathrm{s}}
\newcommand{\End}{\mathrm{End}}
\newcommand{\Rm}{\mathrm{R}}
\newcommand{\g}{\mathfrak{g}}
\renewcommand{\b}{\mathfrak{b}}
\newcommand{\h}{\mathfrak{h}}
\newcommand{\s}{\mathfrak{s}}
\renewcommand{\o}{\mathfrak{o}}
\newcommand{\nn}{\nonumber}
\newcommand{\p}{\mathfrak{p}}
\renewcommand{\l}{\mathfrak{l}}
\newcommand{\si}{\sigma}
\newcommand{\al}{\alpha}
\newcommand{\bt}{\beta}
\newcommand{\be}{\begin{eqnarray}}
\newcommand{\ee}{\end{eqnarray}}
\newtheorem{thm}{Theorem}[section]
\newtheorem{propn}[thm]{Proposition}
\newtheorem{lemma}[thm]{Lemma}
\newtheorem{corollary}[thm]{Corollary}
\newtheorem{example}[thm]{Example}
\newcommand{\parag}{\advance\prg by1 {\noindent\bf\thesection.\the\prg\hspace{6pt}}}
\begin{document}
\title{Regularization of Mickelsson generators for non-exceptional quantum groups.}
\author{
Andrey Mudrov\footnote{This research is supported in part by the RFBR grant 15-01-03148.}\vspace{20pt}\\
\small Department of Mathematics,\\ \small University of Leicester, \\
\small University Road,
LE1 7RH Leicester, UK\\
%\small e-mail: am405@le.ac.uk\\
%[0.3in]\select{DRAFT}
}

\date{}
\maketitle

\begin{abstract}
Let $\g'\subset \g$ be the pair of Lie algebras of either symplectic or orthogonal infinitesimal endomorphisms
of the complex vector spaces $\C^{N-2}\subset \C^N$ and $U_q(\g')\subset U_q(\g)$ the pair of quantum groups
with  triangular decomposition $U_q(\g)=U_q(\g_-)U_q(\g_+)U_q(\h)$.
Let $Z_q(\g,\g')$ be the corresponding step algebra and regard its generators as rational trigonometric functions $\h^*\to U_q(\g_\pm)$.
We describe their regularization such that the resulting generators do not vanish when specialized at any weight.
\end{abstract}

{\small \underline{Mathematics Subject Classifications}:   17B37, 22E47,  81R50.
}

{\small \underline{Key words}: Mickelson algebras, quantum groups, regularization.
}
\date{}
\maketitle
\section{Introduction}
This rather technical paper is devoted to regularization of generators of Mickelsson algebras, regarded as meromorphic functions
on the weight space.
For a general theory of Mickelsson algebras, the reader is referred to \cite{Mick,Zh1,Zh2} (the classical universal
enveloping algebras) and  \cite{Kek,KO} (quantum groups).
Here we are concerned with the special case related to the pair $\g'\subset \g$
 of Lie algebras of orthogonal/symplectic infinitesimal transformations of a fixed pair of vector spaces $\C^{N-2}\subset \C^N$.
%The root system $\Rm^+_{\g'}$ is obtained from $\Rm^+_{\g}$  by removing one simple root.

Let $\g'=\g'_-\op \h'\op \g'_+$ be the triangular decomposition compatible with a decomposition  $\g=\g_-\op \h\op \g_+$, i.e.
$\g'_\pm \subset \g_\pm$, with $\h'\subset \h$ being the  Cartan subalgebras.
Let $N(\g,\g')$ denote the normalizer of the left ideal $U_q(\g)\g'_+$, i.e. the maximal subalgebra
in $U_q(\g)$ where $U_q(\g)\g'_+$ is a two-sided
ideal. Then the quotient $N(\g,\g')/U_q(\g)\g'_+$ is called step or Mickelsson algebra
and denoted by $Z_q(\g,\g')$.  Its significance comes from the fact that it preserves the subspace of $\g'$-singular vectors in $\g$-modules
(recall that a  vector is called  singular if it generates the trivial representation of $U_q(\g')$).

The  Mickelsson algebra carries a two-sided $U_q(\h)$-action, and is generated by elements $z_0, z_{\pm \al}$
of weights $0$ and, respectively, $\pm \al$ with $\al \in \Rm^+_{\g}-\Rm^+_{\g'}$ (the set of positive roots of $\g$ minus those of $\g'$).
The element  $z_0$ can be taken from $q^{\h\ominus \h'}$ while $z_{\pm \al}$ have representatives in the Borel subalgebra $U_q(\b_\pm)$.
The generators $z_{\pm \al}$ can be expressed through extremal projectors  \cite{AST1,AST2,T} or alternatively as matrix entries
of reduced Shapovalov inverse \cite{M,AM3} (the classical version appeared in \cite{NM,Pei,Mol}). In both cases, they require the rational extension, $\hat U_q(\g)$, of
$U_q(\g)$ over the ring of fractions $\hat U_q(\h)$ of $U_q(\h)$ with respect to a certain multiplicative system.

Regarding $\hat U_q(\g)$ as a free right $\hat U_q(\h)$-module, one can think of $z_{\pm \al}$ as rational trigonometric
$U_q(\g_\pm)$-valued  functions (raising and lowering operators) on $\h^*$.
Of course, they can be made polynomial upon multiplying by the common denominator of the Cartan coefficients.
We call it natural regularization and denote the regularized generators by $\check z_{\pm\al}$.
There arises the question whether $\check z_{\pm \al}(\la)=0$ at some $\la$.
The answer is given in the present paper.
We prove that for special linear and symplectic $\g$, $\check z_{\pm\al}(\la)\not =0$ for all $\la\in \h^*$.
For orthogonal $\g$, there are $\dt_{\pm \al}\in U_q(\h)$ such that $\check z_{\pm\al}$ are
divisible by $\dt_{\pm\al}$ on the right and the quotient $\check z_{\pm\al}\dt_{\pm\al}^{-1}$ does not turn zero at all weights.

Regularization of negative Mickelsson generators is more or less the same as  regularization of singular vectors of $V\tp M_\la$ for
$V$ the "natural" representation of $U_q(\g)$ and $M_\la$ the Verma module of highest weight $\la$. In such a setting, it is a part
of a more general problem when $V$ is an arbitrary finite dimensional $U_q(\g)$-module. The analogous problem was considered
for classical universal enveloping algebras in \cite{STV} and completely solved for $\g=\s\l(3)$.
We study the special case  $V=\C^N$ due to its significance. Remark that quantum groups
lead to new effects which are absent in the classical version: the factors $\dt_{\pm\al}$ turn to $1$ for $\g=\s\o(2n+1)$ in the limit $q\to 1$.
%We use a different technique as compared to \cite{STV}.
\section{Quantum group preliminaries}
\label{ssecQUEA}
Throughout the paper,  $\g$ is a complex simple Lie algebra of type $B$, $C$ or $D$.
Due to the natural inclusion $\U_q\bigl(\g\l(n)\bigr)\subset \U_q(\g)$, we do not pay special attention to this case
(which was also covered in \cite{M1}, Corollary 9.2, by different arguments).
We fix a Cartan subalgebra $\h\subset \g$ with the non-degenerate symmetric inner product $(.,.)$ on $\h^*$.
By  $\Rm$ we denote the root system of $\g$ with a fixed subsystem of
positive roots $\Rm^+\subset \Rm$ and the basis of simple roots $\Pi^+\subset \Rm^+$.
For every $\la\in \h^*$ we define its image  $h_\la$ under the isomorphism $\h^*\simeq \h$,
that is $(\la,\bt)=\bt(h_\la)$ for all $\bt\in \h^*$.
We denote by $\rho$ the Weyl vector $\frac{1}{2}\sum_{\al\in \Rm^+}\al $.

Suppose that $q\in \C$ is not a root of unity. Denote by $U_q(\g_\pm)$ the $\C$-algebra generated by  $\{e_{\pm\al}\}_{\al\in \Pi^+}$, subject to the q-Serre relations
\be
\sum_{k=0}^{1-a_{\al\bt}}(-1)^k
\left[
\begin{array}{cc}
1-a_{\al\bt} \\
 k
\end{array}
\right]_{q_{\al}}
e_{\pm \al}^{1-a_{\al\bt}-k}
e_{\pm \bt}e_{\pm \al}^{k}
=0
,
\label{Serre}
\ee
where  $a_{\al\bt}=\frac{2(\al,\bt)}{(\al,\al)}$ is the Cartan matrix, $q_{\al}= q^{\frac{(\al,\al)}{2}}$, and
$$
\left[
\begin{array}{cc}
m  \\ k
\end{array}
\right]_{q}
=
\frac{[m]_q!}{[k]_q![m-k]_q!},
\quad
[m]_q!=[1]_q\cdot [2]_q\cdot\ldots \cdot[m]_q.
$$
Here and further on, $[z]_q=\frac{q^z-q^{-z}}{q-q^{-1}}$ whenever $q^{\pm z}$ make sense.

Denote by $U_q(\h)$ the commutative $\C$-algebra generated by $q^{\pm h_\al}$, $\al\in \Pi^+$. The quantum group $U_q(\g)$ is a $\C$-algebra generated by  $U_q(\g_\pm)$ and $U_q(\h)$ subject
to the relations \cite{D}
\be
q^{ h_\al}e_{\pm \bt}q^{-h_\al}=q^{\pm(\al,\bt)} e_{\pm \bt},
\quad
[e_{\al},e_{-\bt}]=\delta_{\al\bt}\frac{q^{h_{\al}}-q^{-h_{\al}}}{q_\al-q^{-1}_\al}.
\label{comm_rel}
\ee
Although $\h$ is not contained in $U_q(\g)$, still it is convenient to keep reference to $\h$.

Fix the comultiplication in $U_q(\g)$ as in \cite{CP}:
\be
&\Delta(e_{\al})=e_{\al}\tp q^{h_{\al}} + 1\tp e_{\al},
\quad
\Delta(e_{-\al})=e_{-\al}\tp 1 + q^{-h_{\al}} \tp e_{-\al},
\nn\\&
\Delta(q^{\pm h_{\al}})=q^{\pm h_{\al}}\tp q^{\pm h_{\al}},
\nn
\ee
for all $\al \in \Pi^+$.

The subalgebras $U_q(\b_\pm)\subset U_q(\g)$ generated by $U_q(\g_\pm)$ over $U_q(\h)$ are quantized universal enveloping algebras of the
Borel subalgebras $\b_\pm=\h+\g_\pm\subset \g$.
The multiplication map implements an isomorphism $U_q(\g_-)\tp U_q(\g_+)\tp U_q(\h)\to U_q(\g)$ of vector spaces,
which also descends to the decomposition $U_q(\b_\pm )=U_q(\g_\pm)U_q(\h)$.

The notation $n$ is reserved for the rank of $\g$.
We enumerate the elements of $\Pi^+$ so that $\g\l(n)$ is a Lie subalgebra in $\g$ with simple roots $\{\al_i\}_{i=1}^{n-1}$,  and $U_q\bigl(\g\l(n)\bigr)$  the corresponding
quantum subgroup in $U_q(\g)$.
Let $\g\l(\srm)\subset \g\l(n)$ be the maximal subalgebra stable under automorphisms of the Dynkin diagram of $\g$.
One has $\srm=n$ for $\g=\s\p(2n), \s\o(2n+1)$ and $\srm=n-1$ for $\g=\s\o(2n)$.

We use the notation $e_i=e_{\al_i}$ and $f_{i}=e_{-\al_i}$ for  $\al_i\in \Pi^+$ in all cases apart
from $i=n$, $\g=\s\o(2n+1)$, when we set $f_n=[\frac{1}{2}]_q e_{-\al_n}$.
This modifies the relation (\ref{comm_rel}) to
$$
[e_{n},f_{n}]=\frac{q^{h_{\al_n}}-q^{-h_{\al_n}}}{q-q^{-1}}.
$$
All other relations stay intact.

%Although we focus on orthogonal and symplectic $U_q(\g)$ in this paper,
%the results for $U_q\bigl(\g\l(n)\bigr)$ are naturally included.

\subsection{Natural representation}
In this section we recall the natural representation of $\g$ in the vector space $\C^N$. Let $\{w_i\}_{i=1}^N$
be the standard basis in $\C^N$. We used the notation $i'=N+1-i$ for all integers $i\in I=[1,N]$ corresponding
to the flip of the integer interval $I$ around the center $\frac{N+1}{2}$. To improve readability of formulas, we use special notation $*=\frac{N+1}{2}$.

The natural representation is constructed as follows.  We assign the matrices
$$
\pi(e_{i})=E_{i,i+1}+ E_{i'-1,i'}, \quad \pi(f_{i})= E_{i+1,i}+ E_{i',i'-1}, \quad \pi(h_{\al_i})= E_{ii}-E_{i+1,i+1}+E_{i'-1,i'-1}-E_{i'i'},
$$
for $i=1, \ldots,n-1$. This defines a direct sum of two representations of the subalgebra $U_q(\g\l(n))$.
We extend it to the representation of $U_q(\g)$ as
$$
\pi(e_{n})= E_{n,*}+ E_{n'-1,n'}, \quad \pi(f_{n})=  E_{*,n}+ E_{n',*}, \quad \pi(h_{\al_n})=
E_{nn}-E_{n'n'},
$$
$$
\pi(e_{n})= E_{nn'}, \quad \pi(f_{n})= E_{n'n}, \quad \pi(h_{\al_n})=
2E_{nn}-2E_{n'n'},
$$
$$
\pi(e_{n})=\! E_{n-1,n'}+ E_{n,n'+1}, \> \pi(f_{n})= \!E_{n',n-1}+ E_{n'+1,n}, \>
\pi(h_{\al_n})=\!
E_{n-1,n-1}+E_{nn}-E_{n'n'}-E_{n'+1,n'+1},
$$
respectively, for $\g=\s\o(2n+1)$, $\g=\s\p(2n)$, and $\g=\s\o(2n)$.
The Cartan subalgebra is represented by diagonal matrices, and the basis elements  $w_i$
carry weights $\ve_i\in \h^*$ with  $\ve_{i'}=-\ve_i$. The set  $\{\ve_i\}_{i=1}^n$ forms an  orthonormal basis  $\h^*$.

We introduce a partial ordering on the integer interval $[1,N]$ by setting $i\preccurlyeq j$ if and only if $w_j\in U_q(\g_-)w_i$.
One has $i\prec j\Rightarrow i<j$.

Define a matrix $F\in \End(V)\tp U_q(\g_-)$ as $F=(\pi\tp \id)(q^{-\sum_{i=1}^nh_{\ve_i}\tp h_{\ve_i}}\Ru)$.
Its entries $f_{ij}$ are expressed through modified commutators $[x,y]_a=xy-ayx$, $a\in \C$,  as given below.
For all $\g$ and  $i<j\leqslant *$ set
\be
f_{ij}=[f_{j-1},\ldots [f_{i+1},f_i]_{\bar q^{}}\ldots ]_{\bar q},
\quad
f_{j'i'}=[\ldots [f_i,f_{i+1}]_{\bar q^{}},\ldots f_{j-1}]_{\bar q}.
\label{a}
\ee
where $f_{i,i+1}=f_i=f_{i'-1,i}$ is understood. Here and further on the bar designates the inverse, e.g. $\bar q=q^{-1}$.
Furthermore,
\begin{itemize}
\item
for $\g=\s\o(2n+1)$: $f_{nn'}=(q-1)f_n^2$ and
\be
f_{ij'}=q^{\dt_{ij}}[f_{*,j'},f_{i,*}]_{\bar q^{\dt_{ij}}},  \quad i,j<n.
\label{b_2}
\ee
\item
for $\g=\s\p(2n)$:  $f_{nn'}=[2]_qf_n$ and
\be
\label{c_1}
f_{in'}=[f_{n},f_{in}]_{\bar q^{2}}, \quad f_{ni'}=[f_{n'i'},f_{n}]_{\bar q^{2}}, \quad i<n,
\ee
\be
\label{c_2}
f_{ij'}=q^{\dt_{ij}}[f_{nj'},f_{in}]_{\bar q^{1+\dt_{ij}}}, \quad i,j<n.
\ee
\item
for $\g=\s\o(2n)$: $f_{nn'}=0$ and
\be
\label{d_1}
f_{in'}=[f_{n},f_{i,n-1}]_{\bar q^{}}, \quad f_{ni'}=[f_{n'+1,i'},f_n]_{\bar q^{}},\quad i<n-2,
\ee
\be
\label{d_2}
f_{ji'}=q^{\dt_{ij}}[f_{ni'},f_{j,n}]_{\bar q^{1+\dt_{ij}}}, \quad i,j< n.
\ee
\end{itemize}
Finally, $f_{ii}=1$ for all $i$ and $f_{ij}=0$ for $i>j$.

The matrix $F$ participates in  construction of reduced Shapovalov inverse form
$\hat F=\sum_{i,j=1}^N E_{ij}\tp \hat f_{ij}$, which is given next \cite{M}.
It is convenient to use the language of Hasse diagram of the $\prec$ -poset $I$, whose arcs are labeled with negative Chevalley generators
(directed toward superior nodes). We call any ascending sequence of
nodes $(m_i)_{i=1}^k\subset I$ a route from $m_1$ to $m_k$.  A maximal route, i.e. whose adjacent nodes are connected with arcs, is called  path.  If $i\prec m_1$ and $m_k\prec j $ for some $i,j$, we write $i\prec \vec m\prec j$.

For all $i, j\in I$ define $\eta_{ij}\in \h+\C$ by
$$
\eta_{ij}=h_{\ve_i}-h_{\ve_j}+\rho_i-\rho_j-\frac{1}{2}||\ve_i-\ve_j||^2,
$$
where $\rho_i=(\rho,\ve_i)$, and $||\bt ||^2$ is the Eucleadean norm of $\bt\in\h^*$. We
regard $\eta_{ij}$ as an affine function $\h^*\to \C$, $\la\mapsto (\la+\rho, \ve_i-\ve_j)-\frac{1}{2}||\ve_i-\ve_j||^2$.
The entries $\hat f_{ij}$  are constructed as follows.
Put $\hat f_{ii}=1$ and $\hat f_{ij}=0$  for $i>j$. For $i< j$, define $A^j_i=\frac{q-q^{-1}}{q^{-2\eta_{ij}}-1}$.
For a route $\vec m$ denote $f_{\vec m}=f_{m_1,m_2}\ldots f_{m_{k-1},m_k}$ and $A^j_{\vec m}=A^j_{m_1}\ldots A^j_{m_{k}}$. Then
$$
\hat f_{ij}=\sum_{i\prec \vec m\prec j}^{\varnothing } f_{i,\vec m, j}A^j_{i,\vec m},
$$
where the symbol $\varnothing$ indicates here and further on that the empty route $\vec m=\varnothing$ is included.
The elements $\hat f_{1j}$, where $j$ ranges from $2$ to $N$ for $\g=\s\p(N)$ and to $N-1$ for $\g=\s\o(N)$
form the set of negative generators of $Z_q(\g,\g')$ where $\g'\subset \g$ is the simple Lie subalgebra with the root basis
$\{\al_2,\ldots, \al_n\}$.

Let $M_\la$ be the Verma module  of highest weight $\la \in \h^*$ with the canonical generator $v_\la$.
The matrix $\hat F$ is regarded as a map $\h^*\mapsto \End(V)\tp U_q(\g_-)$, such that $\hat f_{ij}(\la)v_\la=\hat f_{ij}v_\la$.
The tensors $\hat F_j=\hat F(w_j\tp v_\la)=\sum_{i=1}^j w_i\tp \hat f_{ij}v_\la$ are singular vectors, i.e. annihilated by all $e_\al$, $\al\in \Pi^+$.
They are well defined for generic $\la$  and generate submodules $M_j\simeq M_{\la+\ve_j}\subset V\tp M_\la$.
At some weights, $\hat F_j$ have zeros and poles and therefore need a regularization, as singular vectors
are defined up to a scalar multiplier. In particular, there is a natural regularization
$\check f_{ij}=\hat f_{ij}\prod_{l\prec j}\bar A_l\in U_q(\b_-)$. It turns out to be excessive in some cases as
having zeros at some $\la$. We study this issue for the most important pairs $(i,j)$ relative to the generators of $Z_q(\g,\g')$.

%{Note that for fixed $j$ all $\{\hat f_{ij}\}_{i=1}^j$ span a $U_q(\g_+)$-module isomorphic to
%a quotient of the dual $\C^N$. It is cyclicly
%generated by $\hat f_{1j}$, and therefore, it is sufficient to rescale $\hat f_{1j}$, $j=2,\ldots, 1'$.}

\section{Standard filtration in  $V\tp M_\la$}
Our study of the matrix $\hat F$ is based on analysis of the tensor product $V\tp M_\la$.
To a large extent, its module structure is captured by a $U_q(\g)$-invariant operator $\Q=(\pi\tp \id)(\Ru_{21}\Ru)\in \End(V)\tp M_\la$, which  is scalar on highest weight submodules
and factor modules.
Denoting by $x_j$ its eigenvalue on the submodule $M_j$, one has  $x_ix^{-1}_j=q^{2\xi_{ij}}|_\la$ with
$$
\xi_{ij}=h_{\ve_i}-h_{\ve_j}+\rho_i-\rho_j+\frac{1}{2}(||\ve_i||^2-||\ve_j||^2).
$$
For generic $\la$ the eigenvalues $x_j$ are pairwise distinct and separate $M_{j}$.

Another tool for the analysis of $V\tp M_\la$ is the sequence of submodules $(V_j)_{j=1}^N$ generated by $v_{\la,l}=w_l\tp v_\la$, $l=1,\ldots, j$.
It forms an ascending  filtration of $V\tp M_\la$ whose graded module is isomorphic to the direct sum $\op_{j=1}^N V_j/V_{j-1}$ of Verma modules.
This implies that $V_j$ are all $\Q$-invariant, \cite{AM2}.
In the present section we study projection  $\wp_j\colon  M_{j} \to V_j/V_{j-1}$, which can be either
zero or an isomorphism. By weight arguments, $M_{j}\subset V_j$, and
the singular vector $\hat F_j$ is mapped into the line generated $v_{\la,j} \mod V_{j-1}$, the highest vector of $V_j/V_{j-1}$.
Define
$\hat C_i$  by $\wp_j(\hat F_j)=\hat C_jv_{\la,j}$.
Our nearest goal is computation of $\hat C_j$, $j=1,\ldots, N$. It is clear that
$\hat C_1=1$. For $j>1$ the answer is given by Proposition \ref{quatnum_C-A} below.

For all $j\in I$ introduce a commutative algebra
$\A_{j}$ as follows. For $j\leqslant \frac{N+1}{2}$ set $\A_j=\C[y_1^{\pm 1},\ldots,y_{j-1}^{\pm 1}]$.
Otherwise put $\A_j$ to be the quotient of $\C[y_1^{\pm 1},\ldots,y_{j-1}^{\pm 1}]$ modulo the relations
$y_ly_{l'}=y_{j'}$, $l=j'+1,\ldots, j-1$, and, for $\g=\s\o(2n+1)$, extended with $y_{j'}^{\frac{1}{2}}$ subject to $qy_*=y_{j'}^{\frac{1}{2}}$.
In all cases,  $\A_{j}$ is a localization of a polynomial algebra.
We can realize $\A_j$ as a subalgebra in $\hat U_q(\h)$ via the assignment $y_l=q^{-2\eta_{lj}}$, $l\prec j$, due to
the following fact.
\begin{propn}
For all $m=2,\ldots,n$ one has $\eta_{m1'}+\eta_{m'1'}=\eta_{11'}$.
Also, for $\g=\s\o(2n+1)$, $2\eta_{*,m'}-1=\eta_{m,m'}$, $m=1,\ldots,n$.
\end{propn}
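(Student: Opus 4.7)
The plan is to verify both identities by direct expansion of the definition
\[
\eta_{ij} = h_{\ve_i} - h_{\ve_j} + \rho_i - \rho_j - \tfrac{1}{2}\|\ve_i - \ve_j\|^2,
\]
using only two structural facts: the orthonormality of $\{\ve_i\}_{i=1}^n$ in $\h^*$ and the reflection relation $\ve_{i'} = -\ve_i$ (which also yields $h_{\ve_{i'}} = -h_{\ve_i}$ and $\rho_{i'} = -\rho_i$). For $\g=\s\o(2n+1)$ I will additionally use that $*' = *$ forces $\ve_* = 0$, hence $h_{\ve_*}=0$ and $\rho_*=0$.

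For the first identity, I would rewrite
\[
\eta_{m,1'} = h_{\ve_m} + h_{\ve_1} + \rho_m + \rho_1 - \tfrac{1}{2}\|\ve_m+\ve_1\|^2,
\qquad
\eta_{m',1'} = -h_{\ve_m} + h_{\ve_1} - \rho_m + \rho_1 - \tfrac{1}{2}\|{-}\ve_m+\ve_1\|^2,
\]
valid for $m \in \{2,\ldots,n\}$ (so that $m\neq 1$ and $m'\neq 1$ in all three series $B$, $C$, $D$). Since $\|\pm\ve_m+\ve_1\|^2 = 2$ by orthonormality, the $h_{\ve_m},\rho_m$ contributions cancel on addition, and I obtain
\[
\eta_{m,1'} + \eta_{m',1'} = 2h_{\ve_1} + 2\rho_1 - 2.
\]
On the other hand, $\eta_{1,1'} = 2h_{\ve_1} + 2\rho_1 - \tfrac{1}{2}\|2\ve_1\|^2 = 2h_{\ve_1}+2\rho_1 - 2$, which matches.

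For the second identity (type $B_n$ only), the substitution $\ve_* = 0$ gives
\[
\eta_{*,m'} = h_{\ve_m} + \rho_m - \tfrac{1}{2}\|\ve_m\|^2 = h_{\ve_m} + \rho_m - \tfrac{1}{2},
\]
so $2\eta_{*,m'} - 1 = 2h_{\ve_m} + 2\rho_m - 2$. A parallel expansion produces
\[
\eta_{m,m'} = 2h_{\ve_m} + 2\rho_m - \tfrac{1}{2}\|2\ve_m\|^2 = 2h_{\ve_m} + 2\rho_m - 2,
\]
and the two expressions agree.

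I do not anticipate a genuine obstacle: both identities are affine-linear statements in $\h^*$ and reduce to elementary arithmetic once the symmetry $\ve_{i'}=-\ve_i$ and (for $B_n$) $\ve_*=0$ are invoked. The only bookkeeping subtlety is ensuring that in the first identity the indices $m$ and $m'$ are distinct from $1$, which is guaranteed by the range $m\in\{2,\ldots,n\}$ in every series under consideration.
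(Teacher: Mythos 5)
Your computation is correct, and it is exactly the "straightforward" verification the paper has in mind (the paper's own proof reads only \emph{Straightforward}): expand $\eta_{ij}$, use $\ve_{i'}=-\ve_i$, $\ve_*=0$ for $B_n$, and orthonormality, then compare. Nothing more is needed.
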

\begin{proof}
Straightforward.
\end{proof}
\noindent
We regard $\hat f_{ij}$ as an element of the free $\A_j$-module
generated by $U_q(\g_-)$.

Consider $\hat C_j$ as a polynomial in $B_i=-A_i$, $i=1,\ldots, j-1$,
where  $A_i=\frac{q-\bar q}{y_i-1}$. Let $|\!\!|i-j|\!\!|$ denote the number of arcs in a path from $i$ to $j$ on the Hasse diagram
\begin{propn}
The coefficients $\hat C_{j}$, $j=2,\ldots,N$, factorize as
$$
\hat C_{j}=
\left\{
\begin{array}{rrrll}
(1-[2]_qq^2A_j){\displaystyle \prod_{i=1,\atop i\not =j'}^{j-1}}(1- qA_i)&\g=\s\p(N),
\\[15pt]
\frac{y_*-q}{y_*-\bar q}{\displaystyle \prod_{i=1,\atop i\not = j', *}^{j-1}}(1- qA_i)
, &\g=\s\o(2n+1),
\\[15pt]
{\displaystyle \prod_{i=1,\atop i\not = j'}^{j-1}}(1- qA_i)
, &\g=\s\o(2n),
\end{array}
\right.
$$
where factor $\frac{y_*-q}{y_*-\bar q}$  is present only when $*<j$.
\label{quatnum_C-A}
\end{propn}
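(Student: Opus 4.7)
The plan is to compute $\wp_j(\hat F_j)$ term by term in the explicit expansion
\[
\hat F_j = w_j \otimes v_\la + \sum_{i \prec j} w_i \otimes \hat f_{ij} v_\la,\qquad \hat f_{ij} = \sum_{\vec m:\, i\prec \vec m\prec j}^{\varnothing} f_{i,\vec m,j}\, A^j_{i,\vec m},
\]
so that $\hat F_j$ becomes a double sum over starting points $i \prec j$ and routes $\vec m$ between them.

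The key technical tool is the \emph{push identity} derived from $\Delta(f_\al)=f_\al\otimes 1 + q^{-h_\al}\otimes f_\al$: for any weight vector $x\in V$ of weight $\mu$,
\[
x\otimes f_\al u = q^{(\al,\mu)}\bigl(f_\al(x\otimes u) - \pi(f_\al)x\otimes u\bigr),
\]
whose first right-hand term is killed modulo $V_{j-1}$ whenever $x\otimes u\in V_{j-1}$. Applied iteratively to the string $f_{i,m_1}f_{m_1,m_2}\cdots f_{m_{r-1},j}$ starting from its leftmost factor, and using that $w_k\otimes v_\la = v_{\la,k}\in V_k\subseteq V_{j-1}$ for $k\prec j$ as the base, each push replaces the current state by its $\pi$-pushed version with scalar factor $-q^{(\al,\mu)}$. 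The compounded action of the $\pi(f_{\cdot,\cdot})$'s traverses the Hasse route from $w_i$ to $w_j$ in the natural representation, so the route term reduces to $(-q)^r A^j_{i,\vec m}\, v_{\la,j}$ modulo $V_{j-1}$.

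Summing over routes then factorises. Away from the crossing nodes the $\prec$-order on $[1,j-1]$ is linear, so each potential intermediate node $m$ is independently included in or excluded from a route, contributing a binary factor $1-qA_m$; this yields the main product $\prod_{m \ne j'}(1-qA_m)$ via the identity $\prod_m(1+x_m)=1+\sum_i x_i\prod_{m>i}(1+x_m)$. The corrections at the crossing nodes come from the non-standard definitions (\ref{b_2})--(\ref{d_2}) of $f_{nn'}$ and related entries: for $\g=\s\p(2n)$, the rescaling $f_{nn'}=[2]_q f_n$ replaces the factor at $m=j'$ by $(1-[2]_q q^2 A_{j'})$; for $\g=\s\o(2n+1)$, the quadratic relation $f_{nn'}=(q-1)f_n^2$ combined with the square-root identification $qy_*=y_{j'}^{1/2}$ in $\A_j$ collapses the route sum near the middle node into the non-polynomial factor $(y_*-q)/(y_*-\bar q)$ and simultaneously eliminates the standard factors at $m=j'$ and $m=*$; for $\g=\s\o(2n)$, the vanishing $f_{nn'}=0$ together with the $\A_j$-fusion $y_ny_{n'}=y_{j'}$ from the preceding proposition identifies the contributions from the two routes through the incomparable diamond pair $\{n,n+1\}$ with a single factor $1-qA_{j'}$ that is absorbed into the main product, so $m=j'$ is simply excluded.

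The main obstacle is the case analysis at the crossings. One must both (i) justify each push modulo $V_{j-1}$ on the non-simple Hasse arcs, which requires an auxiliary induction on the depth of the current left factor in the Hasse poset, and (ii) carry out the algebra producing the correct crossing factor in each type: the $[2]_qq^2$ rescaling for type $C$, the non-polynomial $(y_*-q)/(y_*-\bar q)$ for type $B$ (essentially using $qy_*=y_{j'}^{1/2}$), and the diamond fusion for type $D$ (essentially using $y_ny_{n'}=y_{j'}$). The type-$B$ crossing factor is the most delicate since it is not a polynomial in the $A$'s and only emerges after combining two route types at the middle node through the square-root identification in $\A_j$.
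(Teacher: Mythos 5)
Your push identity is correct and is essentially the content of the paper's Lemma \ref{principal} combined with Lemma \ref{expl_f}: $\wp_j$ extracts, from each $w_i\otimes f_{i,\vec m,j}v_\lambda$, the principal monomial's coefficient $(-1)^{|\!\!|i-j|\!\!|}c_{i,\vec m,j}q^{\tilde\rho_i-\tilde\rho_j}$, which with $A^j_{i,\vec m}=(-1)^{k+1}B_{i,\vec m}$ yields the paper's expansion $\hat c_{ij}=\sum^\varnothing_{i\prec\vec m\prec j}c_{i,\vec m,j}B_{i,\vec m}q^{\tilde\rho_i-\tilde\rho_j}$. Up to this point you and the paper agree.

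The genuine gap is the factorisation step. You assert that away from the crossings ``each potential intermediate node $m$ is independently included in or excluded, contributing a binary factor $1-qA_m$.'' That would be correct if each link coefficient satisfied $c_{lk}q^{\tilde\rho_l-\tilde\rho_k}=q$ for every consecutive pair $(l,k)$ in every route. But $c_{lk}$ has the \emph{special} form $\bar q\,\bar q^{\eta_{lk}(0)}+\sigma q$ whenever $l=k'$, and such diagonal pairs $(m,m')$ appear as \emph{internal} links of routes all over the range $j'\le m\le\srm$, not only at the terminal link $(j',j)$. Concretely, for $\g=\s\p(4)$, $j=4$, the route $(2,3)$ has the special link $c_{23}=[2]_q$; a direct computation gives
\[
\hat C_4 = 1 + qB_2 + qB_3 + [2]_q q^3 B_2B_3 + (q+q^5)B_1 + q^2B_1B_2 + q^2B_1B_3 + [2]_q q^4 B_1B_2B_3,
\]
whereas $(1+[2]_q q^2 B_1)(1+qB_2)(1+qB_3)$ expands differently term by term; the two agree only after invoking the relation $y_1=y_2y_3$ in $\A_4$, since the discrepancy is exactly $q^4\bigl(B_2B_3+(q-\bar q)B_1-B_1B_2-B_1B_3\bigr)$, whose numerator over the common denominator is $t^2(y_1-y_2y_3)$. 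So the product formula is \emph{not} a term-by-term identity of route sums; it is an identity in the quotient algebra $\A_j$, and proving it requires tracking the interplay of the scattered special links with the relations $y_ly_{l'}=y_{j'}$. This is precisely what the paper's proof does via a two-step induction (peeling off $(1-qA_{2'})$ and $(1-qA_2)$ while passing to the $N-2$ case, and for general $j$ peeling off $qB_1\cdots qB_{j'-1}$), with the key algebraic cancellations $c_{1,1'}-q^{-1}c_{1,2'}=\sigma q$ and $c_{2,1'}-q^{-1}c_{2,2'}=-\sigma$. Your proposal omits this and so, as written, it proves a false polynomial identity rather than the intended identity in $\A_j$.

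A secondary imprecision: your claim that the route term reduces to ``$(-q)^r A^j_{i,\vec m}$'' is off; the weight contribution is $q^{\tilde\rho_i-\tilde\rho_j}$ (not $q^r$) and the $c$-coefficients are not all $q$'s, so the per-route scalar is $c_{i,\vec m,j}q^{\tilde\rho_i-\tilde\rho_j}B_{i,\vec m}$. This matters because it is exactly the non-uniformity of the $c_{lk}$'s that prevents the naive binary factorisation.
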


The proof is based on the concept of principal monomial, which is associated with every pair $i\prec j$.
Each path from $i$ to $j$ gives rise to a unique element  $\psi_{ji} \in U_q(\g_-)$ such that  $w_j=\psi_{ji} w_i$.
We denote by  $\psi^{ij}$ the element obtained from $\psi_{ji}$ by reverting the order of simple factors and call it principal monomial
of the pair $(i,j)$.
\begin{lemma}[\cite{AM2}]
Suppose that $i\prec j$ and $\psi \in U_q(\g_-)$ is a Chevalley monomial of weight $\ve_j-\ve_i$ distinct from $\psi^{ij}$.
Then $\wp_{j}(\psi)=0$. Furthermore
$\wp_{j}(w_i\tp \psi^{ij}v_\la)=(-1)^{|\!\!|i-j|\!\!|}q^{\tilde \rho_i-\tilde \rho_j}v_{\la,j}$,
where $\tilde \rho_i=\rho_i+\frac{1}{2}||\ve_i||^2$.
\label{principal}
\end{lemma}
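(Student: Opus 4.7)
The plan is to evaluate $\wp_j(\hat F_j)$ term-by-term in the expansion $\hat F_j = \sum_{i\preccurlyeq j} w_i\tp\hat f_{ij}v_\la$, use Lemma \ref{principal} to isolate the principal-monomial contribution from each summand, and then reorganize the resulting sum as a product. Concretely, since $\wp_j$ annihilates every non-principal Chevalley monomial of weight $\ve_j-\ve_i$ and sends $w_i\tp\psi^{ij}v_\la$ to $(-1)^{|\!\!|i-j|\!\!|}q^{\tilde\rho_i-\tilde\rho_j}v_{\la,j}$, one obtains the master formula
$$\hat C_j = 1 + \sum_{i\prec j}(-1)^{|\!\!|i-j|\!\!|}q^{\tilde\rho_i-\tilde\rho_j}c_{ij},$$
where $c_{ij}$ denotes the coefficient of $\psi^{ij}$ in $\hat f_{ij}$.

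Next I would substitute the route-sum formula $\hat f_{ij}=\sum^{\varnothing}_{\vec m}f_{i,\vec m,j}A^j_{i,\vec m}$ and exploit the fact that, along any subdivided path $i\prec m_1\prec\cdots\prec m_k\prec j$, the principal monomial factors uniquely as $\psi^{ij}=\psi^{i,m_1}\psi^{m_1,m_2}\cdots\psi^{m_k,j}$. The coefficient of $\psi^{ij}$ in $f_{i,m_1}\cdots f_{m_k,j}$ is therefore $\kappa_{i,m_1}\kappa_{m_1,m_2}\cdots\kappa_{m_k,j}$, where $\kappa_{a,b}$ is the coefficient of $\psi^{ab}$ in $f_{ab}$. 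A direct induction on the depth of the nested commutator in (\ref{a}) gives $\kappa_{a,b}=(-\bar q)^{|\!\!|a-b|\!\!|-1}$ whenever the path from $a$ to $b$ stays in the ``linear'' portion of the Hasse diagram, with explicit local corrections when the path uses $f_{nn'}$ (which equals $(q-1)f_n^2$ in type B, $[2]_qf_n$ in type C, $0$ in type D) or the branch at $n-1$ (type D).

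Reindexing the double sum by the ordered subset $S=\{i,m_1,\ldots,m_k\}\subseteq\{1,\ldots,j-1\}$, each $S$ contributes a product of such local factors. When $S$ avoids the ``pinch'' positions $j'$ and, for type B, also $*$, the sign $(-1)^{|\!\!|i-j|\!\!|}$, the $q$-power $q^{\tilde\rho_i-\tilde\rho_j}$, and the accumulated factors $(-\bar q)^{|\!\!|m_l-m_{l+1}|\!\!|-1}$ telescope to $(-qA_i)$ at each position of $S$, and summing over such subsets yields $\prod_{i\ne j',*}(1-qA_i)$. For subsets that cross the pinch the modified $\kappa$-values force local replacements: in type B, the step through $*$ contributes the ratio $\frac{y_*-q}{y_*-\bar q}$, present precisely when $*<j$; in type C, the identity $f_{nn'}=[2]_qf_n$ replaces the normal $(1-qA_{j'})$ block by $(1-[2]_qq^2A_{j'})$; in type D, the two parallel paths through the branch at $n-1$ give signed $\kappa$-contributions whose sum vanishes, so that the $i=j'$ position drops out of the product altogether. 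Assembling these pieces reproduces the three formulas of the proposition.

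The principal obstacle will be the analysis at these pinch positions. The commutator identities at $f_{nn'}$ and at the $n{-}1$-branch feed back into several competing routes at once, and one must carefully carry through the extra $q^{\pm\dt_{ij}}$ prefactors of (\ref{b_2})--(\ref{d_2}) together with the identifications $y_ly_{l'}=y_{j'}$ (and $qy_*=y_{j'}^{1/2}$) in $\A_j$. Verifying the exact cancellation in type D, and repackaging the type B crossing contribution as the compact ratio $\frac{y_*-q}{y_*-\bar q}$, are where the bulk of the technical work is concentrated; once these local identities are established, the global product structure follows from routine expansion of $\prod(1-qA_i)$.
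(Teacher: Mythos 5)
Your proposal does not prove the statement at hand; it proves a different one. The statement is Lemma \ref{principal} itself: that $\wp_j$ annihilates $w_i\tp\psi v_\la$ for every non-principal Chevalley monomial $\psi$ of weight $\ve_j-\ve_i$, and that the principal monomial contributes exactly $(-1)^{|\!\!|i-j|\!\!|}q^{\tilde\rho_i-\tilde\rho_j}v_{\la,j}$. Your very first step is to ``use Lemma \ref{principal} to isolate the principal-monomial contribution'' in $\wp_j(\hat F_j)$, after which you derive the master formula for $\hat C_j$ and its factorization into $\prod(1-qA_i)$ with corrections at the pinch positions $j'$ and $*$. That is a (reasonable) outline of a proof of Proposition \ref{quatnum_C-A}, which in the paper comes later and depends on the lemma. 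As a proof of the lemma it is circular: the conclusion is assumed in the opening sentence, and nothing in the remainder of your text returns to justify either the vanishing claim or the value of the coefficient.

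What the lemma actually requires is independent of the matrix $\hat F$ and of the Cartan coefficients $A^j_i$. In the paper, the vanishing of non-principal monomials is imported from \cite{AM2}, Lemma 3.4, and the coefficient of the principal monomial is computed by a recursion along the path from $i$ to $j$: choosing $\al\in\Pi^+$ with $\ve_i-\ve_k=\al$, the vector $w_i\tp\psi^{kj}v_\la$ lies in $V_{j-1}$ (again by the cited vanishing statement), so applying $\Delta f_\al=f_\al\tp1+q^{-h_\al}\tp f_\al$ to it and using that $V_{j-1}$ is a submodule gives $w_i\tp\psi^{ij}v_\la\equiv -q^{\tilde\rho_i-\tilde\rho_k}\,w_k\tp\psi^{kj}v_\la \bmod V_{j-1}$; iterating along the path accumulates the sign $(-1)^{|\!\!|i-j|\!\!|}$ and the power $q^{\tilde\rho_i-\tilde\rho_j}$. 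None of this --- the coproduct computation, the submodule property of the filtration, the reduction to the cited result --- appears in your proposal, so the statement remains unproved.
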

\begin{proof}
The first part of the statement is proved in \cite{AM2}, Lemma 3.4. Similar statement for a different version of the
comultiplication  is also proved therein. Here we give a proof for the current version of the quantum group.
Suppose that $\al\in \Pi^+$ and $\ve_i-\ve_k=\al$. By \cite{AM2}, Lemma 3.4, the node $w_{i}\tp \psi^{kj}v_\la$ lies
in $V_{j-1}$. Applying $\Delta f_\al= f_\al\tp 1+q^{-h_\al}\tp f_\al$ to $w_{i}\tp \psi^{kj}v_\la$ we get
\be
w_i\tp \psi^{ij}v_\la &=&-q^{(\al,\ve_i)}w_{k}\tp \psi^{kj}v_\la
=-q^{\tilde \rho_i-\tilde \rho_k}w_{k}\tp \psi^{kj}v_\la \mod  V_{j-1}
\ee
for all $k\preccurlyeq j$. Here we used $f_\al w_i=w_k$ and $f_\al\psi^{kj}=\psi^{ij}$ for all $k\preccurlyeq j$.
Proceeding along the path from $i$ to $j$ we complete the proof.
\end{proof}
Thanks to Lemma \ref{principal}, only the principal term of $\hat f_{ij}$ contributes to $\wp_{j}(w_{j}\tp v_\la)$.
%Since the assignment $\psi\mapsto its prinicpal term$ is extends to the natural representation of the multiplicative monoid $U_q(\g_-)$
Put $\si=1$ for symplectic $\g$ and $\si=-1$ for orthogonal $\g$.
\begin{lemma}
\label{expl_f}
The $(i,j)$-principal term of  $f_{ij}$ is $(-1)^{|\!\!|i-j|\!\!|-1} c_{ij}\psi^{ij}$ with
$$
c_{ij}=
\left\{\begin{array}{rcl}
 \bar q^{\eta_{ij}(0)},& i\not = j',&\\
\bar q \bar q^{\eta_{ij}(0)}+\si q,&i = j'.& \\
\end{array}
\right .
$$
\end{lemma}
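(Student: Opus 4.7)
I argue by induction on the path length $|\!\!|i-j|\!\!|$ in the Hasse diagram, using the recursive presentations (\ref{a})--(\ref{d_2}) of $f_{ij}$. The base case $|\!\!|i-j|\!\!|=1$ is immediate: $f_{ij}$ is itself a Chevalley generator, $\psi^{ij}$ coincides with it, and the identity reduces to $\bar q^{\eta_{ij}(0)}=1$ for a simple root, which holds because $(\rho,\al)=\tfrac{1}{2}(\al,\al)$.

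For the non-diagonal inductive step ($i\neq j'$) every recursion in (\ref{a})--(\ref{d_2}) has the shape $f_{ij}=[x,y]_{\bar q^{k}}=xy-\bar q^{k}yx$ with $x,y$ of strictly shorter path length, and with $k$ explicit in each formula. The principal monomial $\psi^{ij}$ factorises uniquely along the Hasse path as $\psi^{iy}\psi^{yj}$, and among the Chevalley monomials appearing in $[x,y]_{\bar q^{k}}$ this ordering can only be produced by the ``reversed'' term $-\bar q^{k}yx$ (any monomial coming from $xy$ places the generators in the wrong order, hence is non-principal in the sense of Lemma \ref{principal}). Combining with the inductive hypotheses for $x$ and $y$ gives $c_{ij}=\bar q^{k}c_{iy}c_{yj}$, while the sign $(-1)^{|\!\!|i-j|\!\!|-1}$ is accounted for by the minus sign of the reversed term together with $|\!\!|i-j|\!\!|=|\!\!|i-y|\!\!|+|\!\!|y-j|\!\!|$. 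The residual arithmetic is the additivity $\eta_{ij}(0)=\eta_{iy}(0)+\eta_{yj}(0)+k$, which is a direct computation from the definition of $\eta_{ij}$ once the appropriate $k$ from (\ref{a})--(\ref{d_2}) is inserted.

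For the diagonal case $i=j'$ the formulas (\ref{b_2}), (\ref{c_1})--(\ref{c_2}), (\ref{d_1})--(\ref{d_2}) read $f_{ii'}=q[x,y]_{\bar q^{2}}$ with $x,y$ crossing the centre symmetrically around the middle vertex $c\in\{n,*\}$. Here \emph{both} summands $xy$ and $-\bar q^{2}yx$ supply a copy of $\psi^{ii'}$: one directly through the factorisation $\psi^{i,c}\psi^{c,i'}$, the other through the middle-vertex quadratic relation $f_{nn'}=[2]_qf_n$ for $\g=\s\p(2n)$ or $f_{nn'}=(q-1)f_n^{2}$ for $\g=\s\o(2n+1)$; for $\g=\s\o(2n)$ the identity $f_{nn'}=0$ eliminates one summand outright. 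Assembling both contributions using the non-diagonal inductive values for $f_{i,c}$ and $f_{c,i'}$ produces precisely $c_{ii'}=\bar q\bar q^{\eta_{ii'}(0)}+\sigma q$, with the sign $\sigma$ inherited from the $\pm$ of the middle relation.

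\textbf{Main obstacle.} The off-diagonal induction is essentially bookkeeping; the real work is the diagonal case $i=j'$, especially for $\g=\s\o(2n+1)$, where the short-root rescaling $f_n=[\tfrac{1}{2}]_qe_{-\al_n}$ and the quadratic relation $f_{nn'}=(q-1)f_n^{2}$ must be combined so that the correction from the second summand of the q-commutator matches precisely $\sigma q=-q$. Verifying that the factor $q^{\dt_{ij}}$ in the definitions (\ref{b_2}), (\ref{c_2}), (\ref{d_2}) conspires with $\eta_{ij}(0)$ on the diagonal to give the stated closed form is the point requiring the most care.
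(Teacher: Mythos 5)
The paper offers no argument for this lemma beyond the single word ``Straightforward,'' so there is nothing in the text to compare your proof against directly; your inductive strategy (induction on $|\!\!|i-j|\!\!|$ using the recursive presentations (\ref{a})--(\ref{d_2}), checking $\eta$-additivity at each step) is a reasonable way to fill that gap, and its base case and off-diagonal transition are sound.

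However, your description of the mechanism in the diagonal case $i=j'$ is not accurate and needs to be repaired. First, a small but real error: you write $f_{ii'}=q[x,y]_{\bar q^{2}}$ uniformly, but for $\g=\s\o(2n+1)$ formula (\ref{b_2}) gives the exponent $\bar q^{\,\dt_{ij}}=\bar q^{\,1}$, not $\bar q^{2}$, on the diagonal; the exponent $\bar q^{2}$ applies only in the $C$ and $D$ series. Second, and more substantively, the extra copy of $\psi^{ii'}$ produced by the $xy$ summand does \emph{not} come ``through the middle-vertex quadratic relation $f_{nn'}=[2]_qf_n$ or $f_{nn'}=(q-1)f_n^2$.'' Those identities are themselves base cases of the lemma for the pair $(n,n')$, not tools in the inductive step for $i<n$. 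What actually happens is the following: because $\psi^{ii'}$ is a palindrome, the $xy$ product also contains a word equal to $\psi^{ii'}$, coming from the ``reversed'' (anti-principal) summands of the innermost $q$-commutators inside $x=f_{ci'}$ and $y=f_{ic}$. In the $\s\p$ and $\s\o(2n)$ case this amounts to a factorisation of $\psi^{ii'}$ through the mirror node $c'$ rather than through $c$, while for $\s\o(2n+1)$ both factorisations pass through $*$. Put differently, the $yx$ term contributes via (principal of $y$)$\cdot$(principal of $x$), whereas the $xy$ term contributes via (anti-principal of $x$)$\cdot$(anti-principal of $y$), and it is only in the diagonal case that the latter pair assemble to the principal word $\psi^{ii'}$. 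Your claim for the off-diagonal case (``any monomial coming from $xy$ places the generators in the wrong order'') is also stated too loosely -- $x$ and $y$ are sums of many monomials, not single generators -- but there the weight bookkeeping does rule out cross-terms: one needs $\ve_i+\ve_j-\ve_y$ to be a node weight, and that forces $i=j'$. Repair the $\s\o(2n+1)$ exponent, replace the ``middle-vertex relation'' heuristic with the anti-principal mechanism, and spell out the weight argument excluding off-diagonal cross-terms, and the proof will be complete.
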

\begin{proof}
Straightforward.
\end{proof}
Given a route $\vec m =(m_1,\ldots,m_k)$ put
$c_{\vec m}=c_{m_1,m_2}\ldots c_{m_{k-1},m_k}$.
Introduce $\hat c_{ij}\in \C$ via the equality $\wp_j(w_i\tp \hat f_{ij}v_\la)=\hat c_{ij}v_{\la,j}$, so that $\hat C_j=\sum_{i=1}^{j}\hat c_{ij}$.
It is easy to  check $\hat c_{ij}=\sum_{i\prec \vec m\prec j}^{\varnothing }c_{i,\vec m,j}B_{i,\vec m}q^{\tilde  \rho_i-\tilde \rho_j}$.

%Regard $\hat f_{ij}$ as a polynomial in $q^{2\eta_{ij}}$, $i\prec j$, with coefficients in $U_q(\g_-)$.
\begin{proof}[Proof of Proposition \ref{quatnum_C-A} for $j=N$]

We prove this special case by induction on $N$. The base for induction
is immediate $\hat C_2=1$ for $\g=\s\o(2)$. It is less obvious although straightforward
$\hat C_3%=\frac{y_1^{\frac{1}{2}}-q^2}{y_1^{\frac{1}{2}}-1}
=\frac{y_*-q}{y_*-\bar q}$ for $\g=\s\o(3)$
and
$\hat C_{2}=1-[2]_qq^{2}A_1$ for $\g=\s\p(2)$.
So we assume $\ve_2\not =0$ for higher $N$ in what follows.
Our strategy is to factor out
$
\phi_{i}=B_{i}+q^{-1}
$
for $i=2',2$. This will facilitate the induction transition.

With $1+\hat c_{2'1'}=1+B_{2'}q^{\tilde\rho_{2'}-\tilde\rho_{1'}}=q\phi_{2'}$, let us
 calculate $\sum_{i=3}^{2'}\hat c_{i1'}$ next. Observe that for all $3\prec\vec m \prec 2'$ we have the equality
$c_{i,\vec m,1'}=q^{-1}c_{i,\vec m,2',1'}=q^{-1}c_{i,\vec m,2'}$. Also, for all $i$ we replace $q^{\tilde\rho_i-\tilde\rho_{1'}}=qq^{\tilde\rho_i-\tilde\rho_{2'}}$. Then
\be
1+\sum_{i=3}^{2'}\hat c_{i1'}&=&q\phi_{2'}+ \sum_{i=3}^{3'}\sum_{i\prec\vec m\prec 2'}^\varnothing c_{i,\vec m,2',1'}B_{i,\vec m}B_{2'}q^{\tilde\rho_i-\tilde\rho_{1'}}+\sum_{i=3}^{3'}\sum_{i\prec\vec m\prec 2'}^\varnothing c_{i,\vec m,1'}B_{i,\vec m}q^{\tilde\rho_i-\tilde\rho_{1'}}.
\nn\\
&=&
q\phi_{2'}(1+ \sum_{i=3}^{3'}\sum_{i\prec\vec m\prec 2'}^\varnothing c_{i,\vec m,2'}B_{i,\vec m}q^{\tilde\rho_i-\tilde\rho_{2'}}).
\nn
\ee
%Observe that for all other $\vec m$ such that $1\prec\vec m \prec2'$, we have $c_{1,\vec m, 1'}=c_{1,\vec m, 2'}q^{-1}=c_{1,\vec m, 2',1'}q^{-1}$.
The sum $\sum_{i=1}^2\hat c_{i1'}$  reads
\be
&=&\sum_{i=1}^2\sum_{i\prec \vec m\prec 1'}^\varnothing c_{i,\vec m,1'}B_{i,\vec m}q^{\tilde\rho_i-\tilde\rho_{1'}}=
\sum_{i=1}^2\sum_{i\prec \vec m\prec 2'}^\varnothing \bigl(c_{i,\vec m,2',1'}B_{i,\vec m,2'}q^{\tilde\rho_i-\tilde\rho_{1'}}+c_{i,\vec m,1'}B_{i,\vec m}q^{\tilde\rho_i-\tilde\rho_{1'}}\bigr)
\nn\\
&=&
\phi_{2'}\sum_{i=1}^2\sum_{i\prec \vec m\prec 2'}^\varnothing c_{i,\vec m,2'}B_{i,\vec m}q^{\tilde\rho_i-\tilde\rho_{1'}}+
\sum_{i=1}^2\sum_{i\prec \vec m\prec 2'}^\varnothing (c_{i,\vec m,1'}-q^{-1}c_{i,\vec m,2'})B_{i,\vec m}q^{\tilde\rho_i-\tilde\rho_{1'}}.
\nn
\ee
The only nonzero differences in the last sum correspond to $i=1, \vec m=\varnothing, \vec m=(2)$
and $i=2, \vec m=\varnothing$.
They are equal to $c_{1,1'}-q^{-1}c_{1,2'}=q^{-\tht_{1,1'}-1}+\si q-q^{-1}q^{-\tht_{1,2'}}=\si q$
and $c_{1,2, 1'}-q^{-1}c_{1,2,2'}=c_{2,1'}-q^{-1}c_{2,2'}=q^{-\tht_{2,1'}}-\si - q^{-1}q^{-\tht_{2,2'}-1} =-\si$, respectively.
This gives the last term
$$
\si q\bigl( qB_{1}- B_{1,2} - B_{2}q^{\tilde\rho_2-\tilde\rho_{1}}\bigr)q^{\tilde\rho_1-\tilde\rho_{2'}}
=\si q\phi_{2'} \bigl(B_1-B_2\bigr)q^{\tilde\rho_1-\tilde\rho_{2'}},
$$
since $\tilde\rho_2-\tilde\rho_{1}=\rho_2-\rho_{1}=-1$.
This way we factor out $q\phi_{2'}$ in the expression for $\hat C_{1'}$:
\be
\frac{\hat C_{1'}}{q\phi_{2'}}&=&
1+ \sum_{i=1}^{3'}\sum_{i\prec \vec m\prec 2'}^\varnothing c_{i,\vec m,2'}B_{i,\vec m}q^{\tilde \rho_i-\tilde \rho_{2'}}
+\si  \bigl(B_1-B_2\bigr)q^{\tilde\rho_1-\tilde\rho_{2'}}.
\label{C/F2'}
\ee
Consider separately the sums over $i=1,2$ and $3\leqslant i \leqslant 3'$.
First, using the equality
$
c_{12'}-q^{-1}c_{22'}=
q^{-\tht_{1,2'}}-\si - q^{-\tht_{2,2'}-2} =-\si
$,
develop the  internal sum for $i=1$ as
$$
\sum_{2\prec \vec m\prec 2'}^\varnothing \bigl(c_{1,2,\vec m,2'}B_{1,\vec m}B_2+c_{1,\vec m,2'}B_{1,\vec m}\bigr)p^{\tilde \rho_1-\tilde \rho_{2'}}
=
q\phi_2\sum_{2\prec \vec m\prec 2'}^\varnothing c_{2,\vec m,2'}B_{1,\vec m}q^{\tilde \rho_2-\tilde \rho_{2'}}-\si B_{1}q^{\tilde \rho_1-\tilde \rho_{2'}},
$$
where the substitution $q^{\tilde \rho_1-\tilde \rho_{2'}}=qq^{\tilde \rho_2-\tilde \rho_{2'}}$ is made.

Next observe that for  $3\leqslant  k \leqslant 3'$ and  $l=1,2$, we have
$
c_{lk}=q^{-\tht_{lk}}=q^{-\rho_l+\rho_k+\frac{||\ve_l-\ve_k||^2}{2}}=q^{-\tilde \rho_l+\tilde\rho_k+\frac{||\ve_l-\ve_k||^2}{2}+
\frac{||\ve_l||^2-||\ve_k||^2}{2}}=q^{-\tilde \rho_l+\tilde\rho_k+1},
$
since $\ve_2\not =0$ by the assumption. Rewrite the internal sum for $i=2$ in (\ref{C/F2'}) as
$$
c_{22'}B_{2}q^{\tilde \rho_2-\tilde \rho_{2'}}+\sum_{k=3}^{3'}\sum_{k\prec \vec m\prec 2'}^\varnothing c_{k,\vec m,2'}B_{k,\vec m}B_{2}q^{\tilde \rho_k-\tilde \rho_{2'}+1}.
$$
Along with the sum over $3\leqslant i\leqslant 3'$ in (\ref{C/F2'}), this  gives
$$
=-c_{22'}q^{\tilde \rho_2-\tilde \rho_{2'}}+\phi_{2}c_{22'}q^{\tilde \rho_2-\tilde \rho_{2'}}+\phi_2\sum_{i=3}^{3'}\sum_{i\prec \vec m\prec 2'}^\varnothing c_{i,\vec m,2'}B_{i,\vec m}q^{\tilde \rho_i-\tilde \rho_{2'}+1}.
$$
Upon these transformations, (\ref{C/F2'}) turns into
\be
\frac{\hat C_{1'}}{q\phi_{2'}}&=&
1-c_{22'}q^{\tilde \rho_2-\tilde \rho_{2'}-1}
 -\si (\phi_2-q^{-1}) q^{\tilde \rho_1-\tilde \rho_{2'}}  +\phi_2c_{22'}q^{\tilde \rho_2-\tilde \rho_{2'}}
\nn\\
& +& q\phi_2\sum_{i=3}^{3'}\sum_{i\prec \vec m\prec 2'}^\varnothing c_{i,\vec m,2'}B_{i,\vec m}q^{\tilde \rho_i-\tilde \rho_{2'}}
+q\phi_2\sum_{2\prec \vec m\prec 2'}^\varnothing c_{2,\vec m,2'}B_{1,\vec m}q^{\tilde \rho_2-\tilde \rho_{2'}}.
\nn
\ee
Since
$
 -\si q^{\tilde \rho_1-\tilde \rho_{2'}}  +c_{22'}q^{\tilde \rho_2-\tilde \rho_{2'}}
= -\si q^{\tilde \rho_1-\tilde \rho_{2'}}  +(q^{-\tht_{22'}-1}+\si q)q^{\tilde \rho_2-\tilde \rho_{2'}}
= q^{-\tht_{22'}-1}q^{\tilde \rho_2-\tilde \rho_{2'}}=q
$,
the first line gives $q\phi_{2}$.
 Eventually,
we get
$$
\hat C_{1'}/\phi_{2}\phi_{2'}q^2=
1  + \sum_{i=2}^{3'}\sum_{i\prec \vec m\prec 2'}^\varnothing c_{i,\vec m,2'}B_{i,\vec m}q^{\tilde \rho_i - \tilde \rho_{2'} }.
$$
The right-hand side is exactly $\hat C_{2'}(y_{1}, y_{3},\ldots, y_{3'})$ for $\dim V=N-2$. This yields $\hat C_{1'}$ by induction on $N$.
\end{proof}
\begin{proof}[Proof of Proposition \ref{quatnum_C-A} for the general $j$]
Now we assume  $1<j'\leqslant j$.
Let $k$ be $j+1$, so that $k'=j'-1$.
Using the formula $c_{1i}=q^{\tilde \rho_i-\tilde \rho_1+1}$ for $1<i<j'-1$, present
$\hat c_{1j}=\sum_{1\prec \vec m\prec j'}^\varnothing c_{1,\vec m}B_{1,\vec m}q^{\tilde \rho_1-\tilde \rho_j}$ as
$$
c_{1j}q^{\tilde \rho_1-\tilde \rho_{j}}+B_{1}\sum_{i=2}^{j-1}\sum_{i\prec \vec m\prec j}^\varnothing c_{1i}c_{i,\vec m}B_{i,\vec m}q^{\tilde \rho_1-\tilde \rho_{j}}
=qB_{1}(1+\sum_{i=2}^{j-1}\sum_{i\prec \vec m\prec j'}^\varnothing c_{i,\vec m}B_{i,\vec m}q^{\tilde \rho_i-\tilde \rho_{j}}).
$$
This is equal to $qB_1(1+\sum_{i=2}^{j-1} \hat c_{ij})$.
$\hat C_{j}=1+\sum_{i=2}^{j-1} \hat c_{ij}+ \hat c_{1j}=qB_1(1+\sum_{i=2}^{j-1} \hat c_{ij})$.
Induction gives $\hat C_{j}=qB_1\ldots qB_{j'-1}(1+\sum_{i=j'}^{j-1} \hat c_{ij})$.
The expression in the bracket is nothing but $\hat C_j(y_{j'},y_{j'+1},\ldots, y_{j-1})$ for $\dim V=j$. Its factorization is already proved.
\end{proof}
We apply Proposition \ref{quatnum_C-A} for the analysis of the matrix entries $\check f_{ij}$ specifically for orthogonal $\g$.
Introduce $\dt_j^-\in \A_j$ as follows. Put $\dt^-_j=1$ for $j<\srm'$ and, for $j\geqslant \srm'$,  $\dt_j^-=\frac{y_{j'}-1}{q-\bar q}=\bar A_{j'}$ if $\s\o(2n)$ and $\dt_j^-=\frac{y_{j'}^{\frac{1}{2}}+1}{q+1}$ if $\s\o(2n+1)$. For symplectic $\g$, set $\dt_j^-=1$ for all $j$.

\begin{lemma}
Suppose that $\g=\s\o(N)$. For all $i,j$ such that and $i\leqslant j'$ the element  $\check f_{ij}$
is  divisible by $\dt_j^-$.
\label{regularitB_so}
\end{lemma}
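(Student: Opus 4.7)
The plan is to work directly from the sum-over-routes expansion
$$
\check f_{ij} \;=\; \sum_{i \prec \vec m \prec j}^{\varnothing} f_{i,\vec m,j}\!\!\prod_{\substack{l \prec j \\ l \neq i,\; l \notin \vec m}}\!\!\bar A_l,
$$
obtained by multiplying the series defining $\hat f_{ij}$ by $\prod_{l \prec j}\bar A_l$. For each route $\vec m$ with $j' \notin \{i\} \cup \vec m$, the contribution carries $\bar A_{j'}$ explicitly as a factor. For $\g = \s\o(2n)$ this is already divisibility by $\dt_j^- = \bar A_{j'}$. For $\g = \s\o(2n+1)$ the factorization $y_{j'} - 1 = (y_{j'}^{1/2} - 1)(y_{j'}^{1/2} + 1)$ in $\A_j$ --- legitimate once $j \geq \srm'$ since then $y_{j'}^{1/2} = q y_*$ lies in $\A_j$ --- shows that $\bar A_{j'}$ is an $\A_j$-multiple of $\dt_j^- = \frac{y_{j'}^{1/2} + 1}{q+1}$.

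It therefore suffices to treat the complementary sum $S$, over routes with $i = j'$ or $j' \in \vec m$, and show it is divisible by $\dt_j^-$. I would test this by specializing at the vanishing locus of $\dt_j^-$: $y_{j'} = 1$ for $\s\o(2n)$ and $y_* = -q^{-1}$ for $\s\o(2n+1)$. On this locus the relations of $\A_j$ collapse to $y_l y_{l'} = 1$ for $j' < l < j$, so each factor $\bar A_l$ pairs with $\bar A_{l'}$ up to an explicit $\A_j$-unit. A route $\vec m = (\vec m_1, j', \vec m_2)$ through $j'$ is then paired with the corresponding $i = j'$ route by rewriting the two-arc subchain meeting $j'$ via the modified-commutator formulas (\ref{d_1})--(\ref{d_2}) for $\s\o(2n)$ or (\ref{b_2}) for $\s\o(2n+1)$. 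The pairing coefficient is governed by the constants $c_{l, j'}$ of Lemma~\ref{expl_f}, whose case split ($l \neq l'$ versus $l = l'$, responsible for the $\sigma=-1$ term specific to the orthogonal case) is precisely what produces cancellation along the locus.

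The conceptual origin of the divisibility is already visible in Proposition~\ref{quatnum_C-A}: the factor $(1 - q A_{j'})$ is conspicuously absent from $\hat C_j$ in the orthogonal case, which corresponds to the lemma at the column top $i = j$, where $\check f_{jj} = \prod_{l \prec j} \bar A_l$ is trivially divisible by $\bar A_{j'}$. The main obstacle I anticipate is propagating this vanishing to all rows $i \leq j'$. I would proceed by induction on $j - i$ using the recurrences for $\hat f_{mj}$ implied by the singular vector condition $\Delta(e_\al) \hat F_j v_\la = 0$, $\al \in \Pi^+$: these express $[e_\al, \hat f_{mj}]\, v_\la$ as a linear combination of $\hat f_{kj} v_\la$ with coefficients in $\hat U_q(\h)$ that are regular along the vanishing locus of $\dt_j^-$. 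After clearing $\prod \bar A_l$ the same recurrences are satisfied by $\check f_{mj}$, so divisibility by $\dt_j^-$ propagates downward along the column, and the combinatorial collapse $y_l y_{l'} = 1$ on the locus completes the cancellation of the residual sum $S$.
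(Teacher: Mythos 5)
Your approach is genuinely different from the paper's, and its central step is a gap. The split of the route expansion into terms carrying an explicit $\bar A_{j'}$ factor plus a residual sum $S$ over routes meeting $j'$ is fine, and you correctly read the conceptual origin off Proposition~\ref{quatnum_C-A}. But the claim that $S$ cancels on the locus $\dt_j^-=0$ is asserted, not proved: the summands $f_{i,\vec m,j}$ for distinct routes are distinct noncommutative words in $U_q(\g_-)$, so a genuine cancellation requires expanding them in a PBW basis and verifying a nontrivial identity, which your route-pairing sketch does not do (and the pairing you describe mixes terms from $\check f_{ij}$ with terms that belong to $\check f_{j'j}$, not to the sum at hand). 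The backup induction on $j-i$ also has a logical hole: the singular-vector recurrences express $e_\al\hat f_{mj}v_\la$ via $\hat f_{lj}v_\la$ for nodes $l$ superior to $m$, so at best they give divisibility of $e_\al\check f_{mj}v_\la$ for all $\al\in\Pi^+$; to pass from there to divisibility of $\check f_{mj}v_\la$ itself you must exclude the appearance of an unexpected singular vector on the locus, and that is exactly the hard content of the lemma.

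The paper closes this gap with a module-theoretic argument you do not invoke. It passes to the full singular vector $\hat F_j^\sharp=\bar A_{j'}\hat F_j$, which is regular on the locus $\dt_j^-=0$, and notes $\wp_j(\hat F_j^\sharp)=\bar A_{j'}\hat C_j v_{\la,j}\to 0$ there, because by Proposition~\ref{quatnum_C-A} $\hat C_j$ has no pole at $y_{j'}=1$. So on the locus $\hat F_j^\sharp$ is either zero or a nonzero element of $V_{j-1}$; the invariant operator $\Q$ separates these options, since $x_jx_{j'}^{-1}=q^{-4}y_{j'}\neq1$, $x_jx_*^{-1}=q^{-1}y_{j'}^{1/2}\neq1$, and all other eigenvalue ratios remain generic, so $x_j$ is absent from the spectrum of $\Q$ on $V_{j-1}$. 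This forces $\hat F_j^\sharp=0$ on the locus, giving divisibility of every coefficient $\check f_{ij}$ simultaneously. To complete your plan you would need either to carry out the PBW cancellation explicitly or to import this $\Q$-spectral separation.
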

\begin{proof}
Only the case  $i\leqslant \srm$,  $\srm'\leqslant j$ requires consideration.
Without loss of generality, we can set  $i=1$.
Recall the basis decomposition $\hat F_{j}=\sum_{l=1}^{j}w_l\tp \hat f_{lj}v_\la$
and consider the singular vector
$\hat F_{j}^\sharp =\frac{y_{j'}-1}{q-\bar q}\hat F_{j}$.
Under the specialization $q^{-2\eta_{l1'}}|_\la=y_l$, $l\prec j$, we have $\hat f_{1j}(\la)=\hat f^\sharp_{1j}\frac{q-\bar q}{y_{j'}-1}$.
Observe that $y_{j'}-1$ is divisible by $\dt_j^-$, and $A_{j'}=\frac{q-\bar q}{y_{j'}-1}$ is the only
factor in $\hat f_{1j}$ that has a pole at $\dt_j^-=0$, so $\hat F_{j}^\sharp$ is regular at $\dt_j^-=0$.
Since $\hat C_{j}$ is regular at generic $\la$ subject  to
 $\dt_j^-=0$, one has $\wp_{j}(\hat F_{j}^\sharp)=0$.
This is possible only if either $\hat F_{j}^\sharp=0$ or $\hat F_{j}^\sharp\in V_{j-1}$. Of all $\Q$-eigenvalues
 only $x_j,x_{j'}$ for even $N$ and $x_j,x_*,x_{j'}$ for odd $N$ become constant at $\dt_j^-=0$,
 while the other can be made distinct from $x_j$.
Since $x_{j}x_{j'}^{-1}= q^{-4}y_{j'}\not =1$
and
$
x_{j}x_{*}^{-1}=q^{-1}y_{j'}^{\frac{1}{2}}\not=1
$,
the spectrum of $\Q$ restricted to $V_{j-1}$ does not contain $x_{j}$, its eigenvalue on $M_{j}$.
Therefore $\hat F_{j}^\sharp$ vanishes along with its coefficient $\hat f^\sharp_{1j}$,  subject to $\dt_j^-=0$.
Hence  $\check f_{1j}$ is divisible by $\dt_j^-$.
\end{proof}

\section{Regularization of matrix coefficients}
In this section we describe regularization for $\hat f_{ij}$.
 We consider reductions of $\check f_{ij}$ along  two-sided ideals in  $\B=U_q(\g_-)$ from the following family.
\label{B_k}
Denote by $\Sg$ the union of integer intervals $[1,\srm]\cup [\srm',1']$ and by $\Sg_{ij}=]i,j[\cap \Sg$ for all pairs $i\prec j$. Fix $k\in \Sg$ and let $l,r$  be the nearest nodes to $k$ such that $l\prec k\prec r$. Suppose that
$f_\al$ and $f_\bt$ corresponding to $\al=\ve_k-\ve_l$ and $\bt=\ve_r-\ve_k$ do not commute. We define
the two sided ideal  $J_{k}\subset \B$ generated by all such $f_{lr}$ (exactly one unless $\g=\s\o(2n)$ and $k=\srm, \srm'$).
As follows from (\ref{Serre}),  $\B/J_{k}$ is isomorphic as a vector space to $U_q(\g^1_-)\tp U_q(\g^2_-)$, where $\g^i\subset \g$ are certain Lie subalgebras. One has the relation $ f_\bt f_{\al}=\bar q f_{\al}f_\bt$ in all cases apart from $\g=\s\p(2n)$  and $k=n,n'$; then
$f_\bt f_{\al}=\bar q^2 f_{\al}f_\bt$. Furthermore
\begin{lemma}
For all $i\prec j$ and $k \in \Sg_{ij}$, one has $f_{ij}=0$ modulo $J_{k}$ and $f_{ij}=(q-\bar q)f_{ik}f_{kj}$ modulo $J_{k'}$.
\end{lemma}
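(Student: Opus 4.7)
My plan is to prove both parts by induction on the length $|j-i|$ of the Hasse-diagram path from $i$ to $j$, using the explicit recursive definitions (\ref{a})--(\ref{d_2}) of $f_{ij}$ as nested $\bar q^?$-commutators.

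For part (1), the base case occurs when $k$ is the unique interior node of the interval, so $l=i$ and $r=j$; then by definition $f_{ij}$ is exactly the $\bar q^?$-commutator $[f_\beta,f_\alpha]_{\bar q^?}=f_{lr}$, which is a generator of $J_k$. For the inductive step I would peel off the outermost $\bar q$-commutator in the recursive expression for $f_{ij}$. If the exposed sub-element (such as $f_{i,j-1}$ or $f_{i+1,j}$, or the appropriate analogue across $*$) still contains $k$ as an interior node, then by the inductive hypothesis it lies in $J_k$, and $f_{ij}\in J_k$ follows because $J_k$ is two-sided. The only remaining configuration is when the outer commutator brings in a Chevalley generator immediately adjacent to $k$; here I would expand the nested bracket by hand and apply the relation $f_\beta f_\alpha\equiv \bar q^? f_\alpha f_\beta$ (consequence of $f_{lr}\equiv 0$), together with the observation that the inner block depends only on Chevalley generators strictly to one side of $k$ and hence commutes with the generator on the opposite side. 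A short direct calculation then shows all four resulting terms cancel.

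For part (2) the plan is analogous, but carried out in $\B/J_{k'}$, where the defining relation becomes $f_{\beta'} f_{\alpha'}\equiv \bar q^? f_{\alpha'} f_{\beta'}$ for the Chevalley generators flanking $k'$. Substituting this into the recursive unpacking of $f_{ij}$ and iterating, the nested commutator structure collapses through the mirror position $k'$, leaving a product of the two half-paths $f_{ik}$ and $f_{kj}$. The coefficient $(q-\bar q)$ arises from the algebraic identity $[x,y]_q-[x,y]_{\bar q}=-(q-\bar q)\,yx$: at the step where the $\bar q$-bracket appearing in the recursive formula (for instance the $\bar q^{1+\delta_{ij}}$ bracket in (\ref{c_2}) with $i=j$, or the $\bar q$ bracket in (\ref{b_2})) meets the $\bar q^?$-relation imposed by $J_{k'}$, precisely this mismatch between brackets produces the factor $q-\bar q$.

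The main obstacle will be the extensive case analysis. The formulas (\ref{a})--(\ref{d_2}) prescribe distinct recursive shapes depending on which of $\s\o(2n+1)$, $\s\p(2n)$, or $\s\o(2n)$ one is in, and on whether $i,j,k,k'$ lie in the lower half $[1,\srm]$, the upper half $[\srm',1']$, or coincide with the distinguished middle nodes $n,n',*$. Each combination needs a separate check, the $\bar q$-powers must be tracked exactly---in particular the $\s\p(2n)$ case at $k=n,n'$ where the commutation parameter is $\bar q^2$, and the $\s\o(2n)$ case at $k=\srm,\srm'$ where $J_k$ has two generators rather than one---and I will have to verify that the prefactor $(q-\bar q)$ appears uniformly across all branches, which is the most delicate bookkeeping step of the proof.
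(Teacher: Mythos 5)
Your proposal follows essentially the same route as the paper, whose proof simply states that the lemma ``readily follows from'' the explicit recursive formulas (\ref{a})--(\ref{d_2}); your induction on the Hasse path length is exactly the direct verification from those formulas that the paper has in mind, made explicit. Although the case analysis you anticipate is indeed tedious, the mechanism driving both parts of the lemma at every inductive step reduces to a single observation that you circle around in your ``four terms cancel'' step: modulo $J_k$ (resp.\ $J_{k'}$) the Chevalley generator $f_\beta$ on the arc leaving $k$ commutes with every simple-root generator occurring in the inner block $f_{ik}$ except $f_\alpha$, hence $q$-commutes with the whole block $f_{ik}$ with coefficient $\bar q^{\pm c}$, and the nested $\bar q^{c}$-bracket then collapses in one stroke to $0$ (for $J_k$) or to $(q-\bar q)\,f_{ik}f_\beta$ (for $J_{k'}$) rather than requiring a term-by-term cancellation.
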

\begin{proof}
Readily follows from (\ref{a}-\ref{d_2}).
\end{proof}
\begin{corollary}
\label{factorization}
Suppose that  $i\prec j$. Then
\begin{itemize}
  \item [i)]
  for all $k \in \Sg_{ij}$,
 $\check f_{ij}=\check f_{ik}\check f_{kj}\mod J_k$ and $\check f_{ij}=\check f_{ik}\check f_{kj}y_k\mod J_{k'}$,
  \item [ii)]
if $k, k' \in \Sg_{ij}$ and $k<k'$,
then
$\check f_{ij}=\check f_{ik}\check f_{kk'}\check f_{k'j}y_{k'}\mod J_k$.

\end{itemize}
\end{corollary}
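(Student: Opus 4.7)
The plan is to deduce both parts from the preceding lemma via term-by-term analysis of the series $\hat f_{ij}=\sum_{\vec m} f_{i,\vec m,j}\,A^j_{i,\vec m}$, with (ii) obtained by iterating (i). So the main work is in (i).

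For the first identity of (i), I classify routes $\vec m$ from $i$ to $j$ by whether they contain $k$. If $\vec m$ does not contain $k$, then since $\prec$ refines the integer order on $\Sg$ and $k\in\Sg_{ij}$, the node $k$ fits strictly between a unique consecutive pair $(a,b)$ of the ascending sequence $(i,\vec m,j)$, placing $k\in\Sg_{ab}$. The preceding lemma then gives $f_{ab}\in J_k$, whence $f_{i,\vec m,j}\in J_k$ by two-sidedness. Consequently only routes through $k$ survive modulo $J_k$; writing each such route as $\vec m=(\vec m_1,k,\vec m_2)$ and multiplying through by the regularization $\prod_{l\prec j}\bar A^j_l$ yields
\[
\check f_{ij}\equiv\sum_{\vec m_1,\vec m_2} f_{i,\vec m_1,k}\,f_{k,\vec m_2,j}\prod_{\substack{l\prec j\\ l\notin\{i\}\cup\vec m_1\cup\{k\}\cup\vec m_2}}\bar A^j_l\pmod{J_k}.
\]
To match this against $\check f_{ik}\,\check f_{kj}$ I commute the Cartan factors $\bar A^k_l$ in $\check f_{ik}$ past $f_{k,\vec m_2,j}$, which has weight $\ve_j-\ve_k$. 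The affine relation between $\eta_{lk}$ and $\eta_{lj}$, together with the rule $q^{h_\al}x=q^{(\al,\wt x)}x\,q^{h_\al}$, converts the twisted $\bar A^k_l$ into $\bar A^j_l$ and closes the identification.

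For the second identity of (i), the lemma instead gives $f_{ab}\equiv(q-\bar q)f_{ak}f_{kb}\pmod{J_{k'}}$ whenever $k\in\Sg_{ab}$. So every route bypassing $k$ is completed through $k$ at the cost of one factor $(q-\bar q)$ per bypass, and these contributions combine with routes already through $k$ (each carrying the factor $A^j_k=(q-\bar q)/(y_k-1)$). The $(q-\bar q)$ numerators cancel against $y_k-1$, and a short telescoping collapses everything into the first-identity expression multiplied by $y_k$. Part (ii) then follows: the first identity of (i) reduces $\check f_{ij}$ modulo $J_k$ to $\check f_{ik}\,\check f_{kj}$, and a further application of (i) to the pair $(k,j)$ with centre $k'$ (interpreted through the appropriate pairing so that the modulus collapses to $J_k$) introduces the extra $y_{k'}$ factor. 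The main obstacle will be the bookkeeping in the second identity --- precisely the telescoping that turns the $(q-\bar q)$ bypass-corrections together with the $A^j_k$ through-$k$ contributions into the single scalar $y_k$; the Cartan-commutation needed for the first identity is more routine once the explicit form of $\eta_{ij}$ is unwound.
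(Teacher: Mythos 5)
Your global strategy --- split $\hat f_{ij}$ according to whether a route passes through $k$, apply the preceding lemma to the bypassing routes, use the $(q-\bar q)$ conversion to telescope into the factor $y_k$ modulo $J_{k'}$, and iterate (i) to obtain (ii) --- is precisely the paper's. The split into the two sums, the observation that bypassing routes vanish modulo $J_k$, the role of $(A_k+q-\bar q)\bar A_k=y_k$ (your ``telescoping''), and the reduction of (ii) to (i) via $(k')'=k$ are all correct.

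However, the Cartan-commutation step in your proof of the first identity of (i) is a genuine gap. You claim that commuting $\bar A^k_l$ past $f_{k,\vec m_2,j}$ of weight $\ve_j-\ve_k$ ``converts the twisted $\bar A^k_l$ into $\bar A^j_l$.'' This is false: the shift gives
\[
\eta_{lk}\bigl(\la+\ve_j-\ve_k\bigr)=\eta_{lk}(\la)+(\ve_j-\ve_k,\ve_l-\ve_k)=\eta_{lj}(\la)-\eta_{kj}(\la),
\]
not $\eta_{lj}(\la)$, so the shifted $\bar A^k_l$ is a function of $y_l/y_k$ and not $\bar A^j_l$. One can check on $\g=\s\p(4)$ with $(i,k,j)=(1,3,4)$ that the concretely evaluated product $\check f_{13}(\la)\check f_{34}(\la)$ does not agree with $\check f_{14}(\la)$ modulo $J_3$, whatever shift one applies to the argument of $\check f_{13}$. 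The point is that the corollary is not an identity in $U_q(\b_-)$ reached by commuting Cartan elements: it is an identity in the free $\A_j$-module $U_q(\g_-)\tp\A_j$ introduced in Section~3, where the $y_l$ are formal central generators, the embedding $\A_k\hookrightarrow\A_j$ is the tautological $y_l\mapsto y_l$, and there is simply nothing to commute. In that framework the routes-through-$k$ sum is literally $\hat f_{ik}\hat f_{kj}$, and the regularizing products $\prod\bar A_l$ over $[i,k[$ and $[k,j[$ tile $[i,j[$ because $k\in\Sg$ is comparable to every node. Replacing your commutation argument by this formal-variable reading closes the gap; the rest of your argument then stands.
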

\begin{proof}
Let us check i) first. Fix  $k\in \Sg_{ij}$ and write
$$
\hat f_{ij}=\sum_{i\prec \vec m\prec k \prec \vec l\prec j}^\varnothing
(f_{i,\vec m, k}A_{i,\vec m}f_{k,\vec l ,j}A_{\vec l}A_{k}+
f_{i,\vec m,\vec l ,j}A_{i,\vec m}A_{\vec l}).
$$
The second sum vanishes modulo $J_k$, and this implies the first factorization.
The element $f_{i,\vec m,\vec l ,j}$  factorizes as $(q-q^{-1}) f_{i,\vec m, k}f_{k,\vec l ,j}$ modulo $J_{k'}$ in accordance with Lemma \ref{B_k}.
Taking into account that $(A_{k}+q-\bar q)\bar A_{k}=y_{k}$, we get factorization modulo $J_{k'}$.
Finally, observe that ii) follows from i).
\end{proof}
\begin{corollary}
\label{not 0 partial}
The elements $\check f_{ij}(\la)$ with  $i\prec j< \srm'$ or $\srm< i\prec j$ do not turn zero at all $\la$.
\end{corollary}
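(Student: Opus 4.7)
My plan is to argue by induction on the Hasse-path length $|\!\!|i-j|\!\!|$, with Corollary \ref{factorization}(i) as the inductive engine. Only the first range $i \prec j < \srm'$ needs consideration; the range $\srm < i \prec j$ is entirely symmetric.

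For the base case $|\!\!|i-j|\!\!| = 1$, the node $j$ is an immediate Hasse successor of $i$, so the only contributing route in the definition of $\hat f_{ij}$ is the empty one, giving $\hat f_{ij} = f_{ij}A^j_i$. Cancelling $A^j_i$ against $\bar A^j_i$ inside $\prod_{l\prec j}\bar A^j_l$ yields
$$
\check f_{ij}=f_{ij}\prod_{l\prec j,\, l\ne i}\bar A^j_l,
$$
the product of the nonzero Chevalley element $f_{ij}$ and a family of nonconstant Cartan factors $\bar A^j_l=(y_l-1)/(q-\bar q)$; such a product cannot be identically zero in $\la$. The single pair $(n-1,n')$ in $\g=\s\o(2n)$ is not directly covered by (\ref{a})--(\ref{d_2}) since the Hasse arrow there is labelled by $f_n$, but setting $f_{n-1,n'}=f_n$ the same reasoning applies.

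For the inductive step $|\!\!|i-j|\!\!|\ge 2$, pick any intermediate Hasse node $k$ with $i\prec k\prec j$. Since $k<j\le \srm'-1\le \srm$, $k$ lies in $[1,\srm]\subset\Sg$, so $k\in\Sg_{ij}$. Corollary \ref{factorization}(i) then gives $\check f_{ij}\equiv\check f_{ik}\check f_{kj}\pmod{J_k}$. Under the vector-space decomposition $\B/J_k\cong U_q(\g^1_-)\tp U_q(\g^2_-)$, the element $\check f_{ik}$ is supported in the first tensor factor (its Chevalley-monomial expansion involves only generators $f_l$ with $l<k$), while $\check f_{kj}$ is supported in the second. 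By the induction hypothesis both $\check f_{ik}(\la)$ and $\check f_{kj}(\la)$ are nonzero at generic $\la$; at any such $\la$ their images in $U_q(\g^1_-)\tp U_q(\g^2_-)$ form a nonzero pure tensor, whence $\check f_{ij}(\la)\notin J_k$ and in particular $\check f_{ij}(\la)\ne 0$.

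The main obstacle I anticipate is confirming that the Chevalley-monomial supports of $\check f_{ik}$ and $\check f_{kj}$ sit in opposite tensor factors of $\B/J_k$. This reduces to inspection of the explicit formulas (\ref{a})--(\ref{d_2}) defining $f_{ij}$ together with the relations generating $J_k$, but must be performed in each of the type-$B$, $C$, $D$ situations separately and requires particular care near the middle of the Hasse diagram.
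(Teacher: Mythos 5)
Your inductive scheme reproduces the paper's key idea --- kill the non-path routes by reducing modulo the ideals $J_k$ --- but the paper does this in one shot by projecting along $\sum_{k\in\Sg_{ij}}J_k$ simultaneously, directly obtaining the single monomial $f_{\vec m}$ for the unique path $\vec m$ from $i$ to $j$, which is a fixed nonzero element of $U_q(\g_-)$ with no Cartan content at all. Your one-$k$-at-a-time factorization through Corollary \ref{factorization}(i) is a longer route to the same place, and it has a genuine flaw in the base case.

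The flaw: you write $\check f_{ij}=f_{ij}\prod_{l\prec j,\,l\ne i}\bar A^j_l$ for $|\!\!|i-j|\!\!|=1$ and argue that ``such a product cannot be identically zero in $\la$.'' Two problems. First, the Corollary demands non-vanishing \emph{at every} $\la$, not merely non-vanishing as a function of $\la$; a single surviving Cartan factor $\bar A^j_l=(y_l-1)/(q-\bar q)$ is a non-unit and does vanish at specific weights, so if your formula were correct the base case (and in fact the Corollary) would be false for any adjacent pair with $i>1$. Second, your formula is not what the argument needs: the regularizing product has to range only over those $l$ that actually produce poles in $\hat f_{ij}$, namely $i\preccurlyeq l\prec j$ --- the Cartan factors $A^j_l$ occurring in $\hat f_{ij}$ are precisely $A^j_i$ and $A^j_m$ with $i\prec m\prec j$, so nothing with $l<i$ appears. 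With that reading, the whole Cartan product in the base case cancels and $\check f_{ij}=f_{ij}$ is a constant nonzero Chevalley element, which is exactly the constancy the paper extracts from its one-shot projection. This is what makes the quantifier ``at all $\la$'' provable; an argument that stops at ``not identically zero'' does not reach it. The worry you flag about the supports of $\check f_{ik}$ and $\check f_{kj}$ landing in opposite tensor factors of $\B/J_k$ is real, but the paper avoids needing it: after projecting out all $J_k$ for $k\in\Sg_{ij}$, the image is the single constant monomial $f_{\vec m}$, and nothing is left to check.
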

\begin{proof}
Indeed,  projecting $\check f_{ij}$  along $\sum_{k\in \Sg_{ij}}J_k$ we get $f_{\vec m}$,
where $\vec m$ is the (unique) path from $i$ to $j$. All factors in the product are Shevalley generators,
therefore  $f_{\vec m}$ and hence $\check f_{ij}$ do not vanish.
\end{proof}

\begin{thm}
\label{regularization}
Suppose that  $i\prec j$ and $i\leqslant j'$.
Then $\check f_{ij}(\la)/\dt^-_j(\la)\not =0$ at all $\la\in \h^*$.
\end{thm}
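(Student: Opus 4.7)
The strategy is to reduce $\check f_{ij}$ modulo $\sum_{k\in\Sg_{ij}}J_k$ using iterated Corollary \ref{factorization}, identify the single genuinely new ingredient---the crossing entry $\check f_{\srm,\srm'}$---and compute it directly. The non-vanishing statements at each step then follow from the previously established material.

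If $j<\srm'$ then $\delta_j^-=1$ and both indices lie in $[1,\srm]$, so Corollary \ref{not 0 partial} immediately yields $\check f_{ij}(\la)\neq 0$ at every $\la$ and closes this case. So assume $j\geq\srm'$, which forces $i\leq j'\leq\srm$. Iterating Corollary \ref{factorization} over $k_1<\dots<k_r$ enumerating $\Sg_{ij}$ gives, modulo $\sum_s J_{k_s}$, a factorization of the form $\check f_{ij}\equiv\check f_{i,k_1}\check f_{k_1,k_2}\cdots\check f_{k_r,j}\,M(y)$, where $M(y)$ is an invertible Laurent monomial in the $y_{k_s}$. Each atomic factor with both endpoints in a single half $[1,\srm]$ or $[\srm',1']$ reduces further, by Corollary \ref{not 0 partial}, to the Chevalley monomial for the unique Hasse path and is thus non-vanishing at every $\la$. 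The only delicate factor is the unique crossing entry $\check f_{\srm,\srm'}$ bridging the gap.

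I would compute the crossing entry directly from formulas (\ref{a})--(\ref{d_2}). For $\s\p(2n)$ it equals $[2]_qf_n$, manifestly non-vanishing, and $\delta_j^-=1$ closes the symplectic case. For orthogonal $\g$, the two- or three-term sum over gap routes (through the diamond for $\s\o(2n)$, or through the $*$-node for $\s\o(2n+1)$) rearranges to $\delta_{\srm'}^- g$ for an explicit $g\in\B$ whose leading Chevalley monomials do not vanish at any $\la$. This settles $j=\srm'$. For $j>\srm'$ the divisor $\delta_j^-$ involves $y_{j'}$ with $j'<\srm$ rather than $y_\srm$; the discrepancy is reconciled by tracking the $y$-monomial $M(y)$ produced by Corollary \ref{factorization}(ii) and applying Lemma \ref{regularitB_so} to each intermediate entry $\check f_{i,k}$ arising in the reduction, which effectively transfers the divisibility from $y_\srm$ onto $y_{j'}$.

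The main obstacle is the explicit computation of $\check f_{\srm,\srm'}$, which is case-by-case across the three types of $\g$ and requires careful handling of the diamond/$*$-structure at the gap, together with the bookkeeping that identifies the resulting product with $\delta_j^-$ times a non-vanishing element of $\B$ when $j>\srm'$. Once this is done the proof is complete, as the reduction modulo $\sum_s J_{k_s}$ of $\check f_{ij}/\delta_j^-$ is then a product of factors none of which vanish at any $\la$.
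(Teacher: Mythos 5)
Your proposal is correct and follows essentially the same route as the paper: project modulo $J_\srm$ (the paper does this once, you iterate Corollary \ref{factorization} over all of $\Sg_{ij}$, which amounts to the same thing), observe that all the factors lying in a single half $[1,\srm]$ or $[\srm',1']$ are non-vanishing by Corollary \ref{not 0 partial}, and reduce to the explicit computation of the crossing entry $\check f_{\srm\srm'}$, which the paper evaluates as $[2]_qf_n$ (symplectic), $f_n^2$ (odd orthogonal), $f_{n-1}f_n$ (even orthogonal) after dividing out $\dt^-$. The variable discrepancy for $j>\srm'$ that you flag is indeed absorbed by Lemma \ref{regularitB_so} together with the monomial $y_{\srm'}$ in Corollary \ref{factorization}(ii), exactly as you indicate.
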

\begin{proof}
  For $\g=\s\p(2n)$ we have $\srm=n$ and $\dt^-_j=1$. In view of Lemma \ref{not 0 partial}, it is sufficient to check the case $i\leqslant n$, $n'\leqslant j$.
Taking projection modulo $J_\srm$ we get factorization $\check f_{ij}=\check f_{in}\check f_{nn'}\check f_{n'j}y_{n'}$. It is not zero
as the left and right factors in the product are non-zero by Corollary \ref{not 0 partial}, and $f_{nn'}=[2]_qf_n\not =0$.
Note that restriction $i\leqslant j'$ can be relaxed for symplectic $\g$.

The case of  orthogonal $\g$ and  $j< \srm'$  is covered by Corollary \ref{not 0 partial}, as $\dt^-_j=1$ then.

Now suppose that  $\g$ is orthogonal and  $s'\leqslant j$ (implying  $i\leqslant s$).
Projection modulo $J_\srm$ yields
$\hat f_{ij}/\dt_j^-=\hat f_{in}(\hat f_{\srm \srm'}/\dt_j^-)\hat f_{n'j}$. It is not zero provided $\hat f_{\srm \srm'}/\dt_j^-\not =0$.
We find it equal to $f_n^2$ in for odd $N$ and $f_{n-1}f_n$ for even $N$. This completes the proof.
\end{proof}
This theorem gives regularization of the negative  Mickelsson generators, $\hat z_{-\al_j}=\hat f_{1j}$,
for $\al=\ve_1-\ve_j\in \Rm^+$.
\begin{example}
\em
Let us illustrate Theorem \ref{regularization} on the example of $\check f_{15}$ for $\g=\s\o(6)$.
The algebra $\A_5$ is generated by $y_1,\ldots, y_4$ subject to $y_3y_4=y_2$. Then  $d_5^-=\bar A_2=\frac{y_2-1}{q-\bar q}$. With $f_{34}=0$,
$\hat f_{15}$  reads
$$
\hat f_{15}=f_{15}A_1+f_{13}f_{35}A_{1,3}+f_{14}f_{45}A_{1,4}
+f_{12}f_{25}A_{1,2}+f_{12}f_{23}f_{35}A_{1,2,3}+f_{12}f_{24}f_{45}A_{1,2,4}.
$$
The elements  $f_{1}f_{2}f_3$, $f_{13}f_3$, $f_{14}f_2$ and $f_{15}$
form a basis in the subspace of weight $\ve_5-\ve_1$ in $U_q(\g_-)$.
With $f_{12}=f_1$, $f_{23}=f_{45}=f_2$,  $f_{24}=f_{35}=f_{3}$, we rewrite $\check f_{15}=\hat f_{15}\bar A_1\ldots \bar A_4$ as
$$
\check  f_{15}=f_{15}\bar A_2\bar A_3 \bar A_4+f_{13}f_{3}\bar A_4\bar A_2+f_{14}f_{2}\bar A_3\bar A_2
+f_{1}f_{2}f_{3}(q-\bar q+A_{3}+A_{4})\bar A_3\bar A_4.
$$
Taking in to account that $(q-\bar q+A_{3}+A_{4})\bar A_3\bar A_4=\frac{y_3y_4-1}{q-\bar q}=\bar A_2$, we get the regularization
$$
\check  f_{15}/\dt_5^-=f_{15}\frac{y_3-1}{q-\bar q}\frac{y_4-1}{q-\bar q}+f_{13}f_{3}\frac{y_4-1}{q-\bar q} +f_{14}f_{2}\frac{y_3-1}{q-\bar q}
+f_{1}f_{2}f_{3},
$$
which never turns zero.
\end{example}

\section{Regularization of positive Mickelsson generators}

In this section, we regularize Mickelsson generators of  positive weights.
The assignment $f_\al\leftrightarrow e_\al$, $q^{\pm h_\al}\mapsto q^{\mp h_\al}$
 extends to an algebra automorphism $\omega\colon U_q(\g)\to U_q(\g)$.
 Denote $e_{ji}=\omega(f_{ij})\in U_q(\g_+)$.

Fix $j>1$.
Given $j\prec l\leqslant N$, define $D_l^j=\frac{q^{\eta_{l1}-\eta_{j1}}}{[\eta_{j1}-\eta_{l1}]_q}\in \hat U_q(\h)$. Set $D^j_{\vec m}=D^j_{m_1}\ldots D^j_{m_k}$ for
 a route $\vec m=(m_1,\ldots,m_k)$ with $j\prec m$.
Define
$$
\hat z_{\al_j}=e_{j1}+\sum_{j\prec   \vec m \prec k\leqslant N}^\varnothing f_{j,\vec m,k}e_{k1}D_{\vec m,k}^j(-1)^{|\!\!|j-k|\!\!|}q^{\eta_{j1}-\eta_{k1}} \in \hat U_q(\g),\quad  j=2,\ldots,N.
$$
and $\check z_{\al_j}=\hat z_{\al_j}\prod_{j\prec l} \bar D_l^{j}\in U_q(\g)$.
The elements $\hat z_{\al_j}$ with $\al_j=\ve_1-\ve_j\in \Rm^+$ form a set of positive Mickelsson generators, \cite{AM3}.

Introduce an ani-algebra automorphism $\tau$  of $U_q(\g_-)$ that is identical on the Chevalley generators. The Serre relations
imply that $\tau$ is well defined.
\begin{lemma}
For all $i,j$, $\tau(f_{ij})=f_{j'i'}$.
\end{lemma}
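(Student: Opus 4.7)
The plan is to establish the identity $\tau(f_{ij}) = f_{j'i'}$ by direct case analysis against the defining formulas (\ref{a})--(\ref{d_2}). The engine is the elementary fact that an anti-algebra map interchanges the two arguments of a $q$-commutator:
\[
\tau([x,y]_a) = \tau(y)\tau(x) - a\,\tau(x)\tau(y) = [\tau(y), \tau(x)]_a,
\]
since $[x,y]_a = -a[y,x]_{a^{-1}}$. So the scalar $a$ is preserved while the arguments swap.

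First I would settle the pure case (\ref{a}), where $i$ and $j$ lie on the same side of $*$. An induction on $j-i$ starting from $f_{i,i+1} = f_i = \tau(f_i)$ peels the outer brackets of
\[
f_{ij} = [f_{j-1}, \ldots [f_{i+1}, f_i]_{\bar q} \ldots]_{\bar q}
\]
and rearranges them into
\[
\tau(f_{ij}) = [\ldots [f_i, f_{i+1}]_{\bar q} \ldots f_{j-1}]_{\bar q} = f_{j'i'}.
\]
The diagonal entry $f_{nn'}$ is a polynomial in $f_n$ alone (and vanishes for $\s\o(2n)$), so $\tau(f_{nn'}) = f_{nn'} = f_{(n')',n'}$.

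Next, formulas (\ref{b_2}), (\ref{c_1}), (\ref{d_1}) are straightforward: applying $\tau$, swapping inside the bracket, and using the (\ref{a})-case to compute $\tau$ on the inner factors yields precisely the defining expression of $f_{j'i'}$, because these formulas are symmetric under $i \leftrightarrow j$ up to primes. For instance, (\ref{b_2}) gives $\tau(f_{ij'}) = q^{\dt_{ij}}[f_{*,i'}, f_{j,*}]_{\bar q^{\dt_{ij}}} = f_{j,i'}$, and (\ref{c_1}) gives $\tau(f_{in'}) = [f_{n'i'}, f_n]_{\bar q^2} = f_{n,i'}$, matching $f_{(n')',i'}$.

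The main obstacle will be the symplectic formula (\ref{c_2}) and its even-orthogonal analogue (\ref{d_2}). Direct action of $\tau$ on the right-hand side yields
\[
\tau(f_{ij'}) = q^{\dt_{ij}}[f_{n'i'}, f_{jn'}]_{\bar q^{1+\dt_{ij}}},
\]
whereas the target $f_{j,i'}$ is \emph{defined} by the analogue with $n'$ replaced by $n$:
\[
f_{j,i'} = q^{\dt_{ij}}[f_{n,i'}, f_{j,n}]_{\bar q^{1+\dt_{ij}}}.
\]
Substituting $f_{n,i'} = [f_{n'i'}, f_n]_{\bar q^2}$ and $f_{jn'} = [f_n, f_{j,n}]_{\bar q^2}$ from (\ref{c_1}) (respectively (\ref{d_1})) reduces the required identity to a Jacobi-style relation in the three letters $A = f_{n'i'}$, $B = f_n$, $C = f_{j,n}$. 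To close the proof I would exploit the q-Serre relations (\ref{Serre}) coupling $f_n$ with $f_{n-1}$ — the only Chevalley generator appearing at the outermost layer of both $A$ and $C$ besides $B$ itself — together with the fact that $f_n$ commutes with all $f_k$ for $k \leq n-2$, so that the interior layers of $A$ and $C$ commute past $B$ unchanged and only the single $f_n$/$f_{n-1}$ Serre exchange is needed.
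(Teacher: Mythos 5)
Your proposal follows essentially the same route as the paper. Both identify the pure case (\ref{a}) as immediate, reduce (\ref{b_2}), (\ref{c_1}), (\ref{d_1}) to the (\ref{a}) case by the anti-automorphism property $\tau([x,y]_a)=[\tau(y),\tau(x)]_a$, and isolate (\ref{c_2})/(\ref{d_2}) as the genuine difficulty, which after substituting (\ref{c_1})/(\ref{d_1}) becomes a three-letter identity of the shape $[A,[B,C]_{\bar q^2}]_{\bar q^{1+\dt_{ij}}}=[[A,B]_{\bar q^2},C]_{\bar q^{1+\dt_{ij}}}$ with $A=f_{n'i'}$, $B=f_n$, $C=f_{jn}$ — precisely the reduction stated in the paper.

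The only place you are slightly underselling the work is the closing sentence. "Only a single $f_n/f_{n-1}$ Serre exchange is needed" is more optimistic than what actually happens: one has to move $B=f_n$ past $[A,C]$, which contains $f_{n-1}$ on both extremities, so several applications of the $f_n$--$f_{n-1}$ $q$-Serre relation (or a cancellation argument) are involved. The paper's device for organizing exactly this computation is the modified Jacobi identity
\[
[x,[y,z]_a]_b=[[x,y]_c,z]_{\frac{ab}{c}}+c\,[y,[x,z]_{\frac{b}{c}}]_{\frac{a}{c}},
\]
applied with $c=\bar q^2$: this realizes your target as the first summand, and the remaining task becomes showing that the second summand $[B,[A,C]_{q^{1-\dt_{ij}}}]$ vanishes, which is where the $A$-type relations are used. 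So the content is the same; the paper simply names the Jacobi rearrangement that your sketch implicitly performs. Since the paper's own proof also leaves the final $A$-type verification to the reader, this is a difference of emphasis rather than a gap.
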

\begin{proof}
The proof can be conducted by considering  explicit expressions (\ref{a}-\ref{d_2}) for $f_{ij}$.
It is immediate for general linear and odd  orthogonal $\g$.
In other cases, the hardest part of the proof boils down  to checking
$[f_{jn},f_{ni}]_{\bar q^{1+\dt_{ij'}}}=[f_{jn'},f_{n'i}]_{\bar q^{1+\dt_{ij'}}}$ for $i<n$, $n'<j$, $\g=\s\p(2n)$ and $\g=\s\o(2n)$.
This can be done by applying the modified Jacobi identity
\be
[x,[y,z]_a]_b=[[x,y]_c,z]_{\frac{ab}{c}}+c [y,[x,z]_{\frac{b}{c}}]_{\frac{a}{c}},
\label{Jacobi}
\nn
\ee
for appropriate values of $a,b,c\in \C$. They are chosen so as to kill one of the internal commutators, making use of the
relations in the subalgebras of A-type in $U_q(\g)$.
\end{proof}

Introduce $\dt_j^-\in \A_j$ as follows. Put $\dt^-_j=1$ for $j<\srm'$ and, for $j\geqslant \srm'$,  $\dt_j^-=\frac{y_{j'}-1}{q-\bar q}=\bar A_{j'}$ if $\s\o(2n)$ and $\dt_j^-=\frac{y_{j'}^{\frac{1}{2}}+1}{q+1}$ if $\s\o(2n+1)$.

Denote
$\dt_j^+=\frac{q^{2\eta_{j1}-2\eta_{j'1}}-1}{q-\bar q}$ for $\s\o(2n)$,
and $\dt_j^+=\frac{qq^{2\eta_{j1}-2\eta_{*1}}+1}{q+1}$  for $\s\o(2n+1)$ assuming $j\leqslant s$. In all other cases including symplectic $\g$, put $\dt_j^+=1$.
\begin{thm}
For each $j=2,\ldots,N$ the element $\check z_{\al_j}$ is divisible by $\dt_j^+$. The quotient
$\check z_{\al_j}(\la)/\dt_j^+(\la)$ does not turn zero at all $\la \in \h^*$.
\end{thm}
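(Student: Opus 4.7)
The plan is to carry over the strategy of Theorem \ref{regularization} from the negative to the positive side. The anti-algebra automorphism $\tau$ of $U_q(\g_-)$ and the algebra automorphism $\omega$ of $U_q(\g)$ introduced just before the statement strongly suggest a symmetry: after swapping the roles of $U_q(\g_\pm)$ and conjugating by the centre-flip $i\mapsto i'$, the positive generator $\hat z_{\al_j}$ should be realised as a ``dual singular vector'' in a module-theoretic setup parallel to the one used for $\hat F_j$ in Section 3. I would make this precise by working in the tensor product $M_\la\tp V$, whose standard filtration is dual to the filtration $(V_j)$ of Section 3 and whose $\Q$-eigenvalues are now expressed through $\eta_{l1}$ in place of $\eta_{l,1'}$.

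Once this dual picture is in place, divisibility of $\check z_{\al_j}$ by $\dt_j^+$ mirrors Lemma \ref{regularitB_so}. Introduce the rescaled element $\check z_{\al_j}^\sharp=\check z_{\al_j}/\dt_j^+$, a priori rational in the Cartan variables, and interpret it as a singular covector in the dual setup. At the hyperplane $\dt_j^+=0$ its projection onto the top graded piece of $M_\la\tp V$ vanishes, by exactly the same generic-regularity argument: the positive analogue of $\hat C_j$ is regular there, forcing the projection to be zero. A direct computation of the positive $\Q$-spectrum — the analogue of the checks $x_jx_{j'}^{-1}=q^{-4}y_{j'}\not=1$ and $x_jx_*^{-1}=q^{-1}y_{j'}^{1/2}\not=1$ in the proof of Lemma \ref{regularitB_so} — then shows that no other $\Q$-eigenvalue coincides with the one attached to the weight $\ve_j-\ve_1$ on $\dt_j^+=0$. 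Hence the covector itself vanishes, yielding divisibility of its coefficients, including $\check z_{\al_j}^\sharp$.

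For non-vanishing of the quotient, I would mirror Corollaries \ref{factorization} and \ref{not 0 partial}. The $\omega$-images of the ideals $J_k\subset U_q(\g_-)$ transport to two-sided ideals in $U_q(\g_+)$ whose combined quotient kills all ``mixed-path'' contributions in $\hat z_{\al_j}$. Projecting $\check z_{\al_j}/\dt_j^+$ modulo this sum produces a single Chevalley monomial in $U_q(\g_+)$ — essentially the $\omega$-image of the unique path from $1$ to $j$ — multiplied by a Cartan element that is manifestly non-zero at every $\la\in\h^*$. This gives the desired non-vanishing.

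The technical core, and the main obstacle, is to verify that the coincidences of $\Q$-eigenvalues on the dual filtration correspond \emph{exactly} to the factors appearing in $\dt_j^+$: both the factor proportional to $q^{2\eta_{j1}-2\eta_{j'1}}-1$ in the $\s\o(2n)$ case and the odd-$N$ counterpart involving $\eta_{*1}$ must be shown to arise precisely from the matching $\Q$-eigenvalue coincidences in $M_\la\tp V$. Getting this matching right, as in the negative case, pins the divisibility to the stated factor and is the most delicate arithmetic step of the argument; everything else is bookkeeping analogous to Section 3 and Section 4.
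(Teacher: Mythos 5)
Your plan is a genuinely different route from the paper's, and it runs into a structural issue that the paper's argument neatly avoids.

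The paper does \emph{not} re-run the Section 3 machinery in a dual module $M_\la\tp V$. Instead it exploits the PBW decomposition of $\check z_{\al_j}$ itself: writing $\check z_{\al_j}(\la)=\sum_{k=j}^N(-1)^{|\!\!|j-k|\!\!|}q^{\eta_{j1}-\eta_{k1}}\check z_{jk}(\la)\,e_{k1}$ with coefficients $\check z_{jk}\in U_q(\g_-)$, it observes that under the Cartan substitution $y_{k'}=q^{2\eta_{j1}-\eta_{k1}}$ (which takes $\dt_{j'}^-$ to $\dt_j^+$) the anti-automorphism $\tau$ sends each $\check z_{jk}$ to $\check f_{k'j'}\prod_l\bar A_l$. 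Divisibility by $\dt_j^+$ is then read off directly from Lemma~\ref{regularitB_so} for the $\check f_{k'j'}$ (or from $\bar A_j$ appearing in the residual Cartan product), and the non-vanishing follows because $\check z_{jN}/\dt_j^+$ is, up to an explicit nonzero Cartan factor, the $\tau$-image of $\check f_{1j'}/\dt_{j'}^-$, which Theorem~\ref{regularization} already handled. No new filtration, no new $\Q$-spectrum analysis, no new analogue of Proposition~\ref{quatnum_C-A} is needed.

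Your proposal would force you to reconstruct all of that in a dual setting, and it also starts from a misleading picture: $\hat z_{\al_j}$ is not an element of $\hat U_q(\g_+)$ and does not sit in $M_\la\tp V$ as an $\omega$-image of a singular vector. By its very definition it mixes negative factors $f_{j,\vec m,k}$ with positive factors $e_{k1}$, so the whole ``dual singular covector in $M_\la\tp V$'' framing would need to be built from scratch and is not the natural home for this object. You flag the $\Q$-eigenvalue matching as the ``main obstacle''; the paper's point is precisely that this obstacle never has to be confronted — the $\tau$-reduction transports the already-proved negative-side statements wholesale. In short: the steps you list are plausibly salvageable but amount to re-proving the hard part, whereas the paper's argument is a two-line reduction once the expansion $\check z_{\al_j}=\sum_k\check z_{jk}e_{k1}$ and the identity $\tau(\check z_{jk})=\check f_{k'j'}\prod_l\bar A_l$ are noticed.
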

\begin{proof}
We can assume that $\g$ is orthogonal and $s'\leqslant j'$.
For all $\la\in \h^*$,  define $\check z_{jk}\in U_q(\g_-)$, $k=j,\ldots, N$, setting
$\check z_{jj}=\prod_{j< l}\bar D^j_l$
and
$\check z_{jk}=
\sum_{j\prec \vec m \prec k}^\varnothing f_{j,\vec m,k}D_{\vec m,k}^j\check z_{jj}$
for $j\prec k$.
Then
$
\check z_{\al_j}(\la)=\sum_{k=j}^N(-1)^{|\!\!|j-k|\!\!|}q^{\eta_{j1}-\eta_{k1}}\check z_{jk}(\la) e_{k1}.
$
By the PBW theorem for $U_q(\g)$, $\check z_{\al_j}(\la)=0$ if and only if $\check z_{jk}(\la)=0$ for all $k$.

The assignment  $y_{k'}=q^{2\eta_{j1}-\eta_{k1}}$, $j\prec  k$, sends $\dt_{j'}^-$ over to $\dt_j^+$. 
One can check that the algebra generated by $\{y_l\}_{l<j'}$ is isomorphic to  $\A_{j'}$.
Indeed, $y_ky_{k'}=y_{j}$ for all  $k\not =*$ and $q^2y_*^2=y_{j}$ (odd $N$).

The anti-automorphism $\tau$ takes
$\check z_{jk}$
to $\check f_{k'j'}(\prod_{l}\bar A_l)$, where the product is done over those $l< j'$ which $k'\not \preccurlyeq l$; it includes
all $\bar A_l$ with $k'<l$.
By Lemma \ref{regularitB_so}, $\check f_{k'j'}$ is divisible by $\dt_{j'}^-$ if $k'\leqslant j$,  otherwise $\prod_{l}\bar A_l$
contains $\bar A_j$ divisible by $\dt_{j'}^-$.  This proves that $\check z_{\al_j}$ is divisible by $\dt_{j}^+$.
Finally, the element  $\check f_{1j'}/\dt_{j'}^-$ is the $\tau$-image of $\check z_{jN}/\dt_{j}^+$, up to the factor $(-1)^{|\!\!|j-k|\!\!|}y_{k'}
\not =0$.
Since $\check f_{1j'}/\dt_{j'}^-$ does not vanish at all weights, $\check z_{\al_j}/\dt_{j}^+\not =0$ at all weights too.
\end{proof}
\section{Application: decomposition of $V\tp M_\la$}
%\subsection{}
Define an ascending sequence of submodules $W_j=\sum_{i=1}^j M_{i}\subset V\tp M_\la$, for all $j=1,\ldots, N$.
Lemma \ref{principal} implies that $W_j\subset V_j$. We apply the regularization analysis to
answer the question when $V\tp M_\la$ is a direct sum of  $M_{j}$. Clearly that is the case if
the eigenvalues $x_j$ of $\Q$ are pairwise distinct. However the converse is not true.
\begin{propn}
\label{W=V}
For all $j\in [1,N]$, the following statements are equivalent:
{\em i)}  $V_j= W_j$,  {\em ii)}  $V_i=W_i$ for all $i\leqslant j$,
{\em iii)} projection $\wp_i\colon M_i\to V_i/V_{i-1}$ is an isomorphism  for all $i\leqslant j$,
 {\em iv)} $W_j=\op_{i=1}^j M_i$.
\end{propn}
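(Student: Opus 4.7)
My plan is to derive all four equivalences from a single formal character comparison, combined with the dichotomy that each $\wp_i$ is either zero or an isomorphism. The two ingredients already in place are: the Verma filtration $V_i/V_{i-1}\simeq M_{\la+\ve_i}$ cited above the proposition, which gives $\mathrm{ch}(V_j)=\sum_{k=1}^j\mathrm{ch}(M_{\la+\ve_k})=\sum_{k=1}^j\mathrm{ch}(M_k)$ since $M_k\simeq M_{\la+\ve_k}$; and the automatic inclusion $W_j\subseteq V_j$ from Lemma~\ref{principal}.

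The heart of the argument is (i)$\Leftrightarrow$(iv). The weight-wise inequality $\mathrm{ch}(\sum_k M_k)\leq\sum_k\mathrm{ch}(M_k)$, valid because all weight spaces of $V\tp M_\la$ are finite-dimensional, holds with equality iff the sum is direct. Combined with $W_j\subseteq V_j$, the identity $V_j=W_j$ (i.e.\ (i)) is equivalent to $\mathrm{ch}(W_j)=\mathrm{ch}(V_j)=\sum_k\mathrm{ch}(M_k)$, which in turn is equivalent to $W_j=\bigoplus_{k=1}^j M_k$ (i.e.\ (iv)). Applying the same argument at each intermediate level, (iv) further yields $W_i=\bigoplus_{k\leq i}M_k$ with $\mathrm{ch}(W_i)=\mathrm{ch}(V_i)$, hence $W_i=V_i$, giving (ii); and (ii)$\Rightarrow$(i) is tautological.

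For (ii)$\Leftrightarrow$(iii) I would induct on $i\leq j$, exploiting that $\wp_i$ is either zero or an isomorphism (both $M_i$ and $V_i/V_{i-1}$ being Verma modules of highest weight $\la+\ve_i$). If (ii) holds at $i$, then $V_i=W_i=W_{i-1}+M_i$, so $\wp_i$ is surjective, hence nonzero, hence an isomorphism. Conversely, the base $V_1=M_1=W_1$ is automatic ($V_1$ is itself Verma, cyclically generated by $w_1\tp v_\la=\hat F_1$); and inductively, assuming $V_{i-1}=W_{i-1}$, the hypothesis that $\wp_i$ is an isomorphism gives $V_i=V_{i-1}+M_i=W_{i-1}+M_i=W_i$.

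The whole proof is essentially bookkeeping; the only non-formal input, and the only point where one must be careful, is the strictness in $\mathrm{ch}(\sum_k M_k)\leq\sum_k\mathrm{ch}(M_k)$ and the fact that equality of characters upgrades to equality of submodules when nested and with finite-dimensional weight spaces. Both are standard, so no serious obstacle is expected.
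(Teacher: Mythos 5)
Your proof is correct, and it reaches the same endpoint from essentially the same ingredients, but it reorganizes the chain of implications. The paper runs $\mathrm{ii)}\Rightarrow\mathrm{i)}$ trivially, then $\mathrm{i)}\Rightarrow\mathrm{ii)}$ by a \emph{single-weight} multiplicity count: it takes the smallest $k$ with $W_k\neq V_k$, observes that the $(\la+\ve_k)$-weight space of $W_k$ is strictly smaller than that of $V_k$, and (implicitly, since no $M_i$ with $i>k$ meets weight $\la+\ve_k$) that this deficiency persists up to $W_j\subsetneq V_j$. It then gets $\mathrm{ii)}\Leftrightarrow\mathrm{iii)}$ by the surjective/injective dichotomy and induction, and finally $\mathrm{iii)}\Leftrightarrow\mathrm{iv)}$ again by induction, using injectivity of $\wp_i$ to get $M_i\cap W_{i-1}\subseteq M_i\cap V_{i-1}=\{0\}$. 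You instead go straight for $\mathrm{i)}\Leftrightarrow\mathrm{iv)}$ by the global character inequality $\mathrm{ch}\bigl(\sum M_k\bigr)\leqslant\sum\mathrm{ch}(M_k)$, with equality iff the sum is direct, squeezed against $\mathrm{ch}(V_j)=\sum\mathrm{ch}(M_k)$ from the Verma filtration; you then peel off $\mathrm{iv)}\Rightarrow\mathrm{ii)}$ by applying the same comparison at each level, and handle $\mathrm{ii)}\Leftrightarrow\mathrm{iii)}$ by the same induction as the paper. Both arguments ultimately rest on exactly two facts already established: that each $M_i$ and each $V_i/V_{i-1}$ is a Verma module of highest weight $\la+\ve_i$ (so maps between them are zero or isomorphisms), and that $W_j\subseteq V_j$. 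Your version is slightly more uniform, doing one clean character comparison in place of a per-weight count, at the modest cost of invoking the standard equality criterion for $\mathrm{ch}\bigl(\sum M_k\bigr)$; the paper's version avoids that lemma but has to track which weight witnesses the discrepancy. One small remark: the finite-dimensionality of weight spaces is what lets you upgrade $\mathrm{ch}(W_j)=\mathrm{ch}(V_j)$ plus $W_j\subseteq V_j$ to $W_j=V_j$; the inequality for the sum and its equality criterion hold regardless, so that clause of your justification is attached to the wrong step, though the step itself is fine.
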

\begin{proof}
Since all $M_{i}$ and $ V_i/V_{i-1}$ are Verma modules of the same highest weight,  iii) is equivalent to  $\wp_i$  being surjective or injective.
The implication ii) $\Rightarrow$ i) is trivial.
With $W_1=V_1$, assume that ii) is violated and let $k> 1$ be the smallest such that $W_k\not = V_k$.
Then $V_k$ and $W_k$ have different multiplicities of weight $\la+\ve_k$, so that  i) $\Rightarrow$ ii).

Assuming  ii) we find that all maps $\wp_i$ are surjective, hence  iii). Conversely, iii) implies that all maps $W_i\to V_i/V_{i-1}$ are surjective. Since, $W_1=V_1$,
 induction on
$i$ proves ii).

If $\wp_i$ are injective, then $M_i\cap W_{i-1}\subset M_i\cap V_{i-1} =\{0\}$,
that is, iii)  $\Rightarrow$  iv). Finally, suppose iv) and prove iii) by induction: assuming
iii) $\Leftrightarrow$ ii) true for $i<j$, the map $\wp_{j}$ is injective and hence an isomorphism. 
\end{proof}
Let $u_j= \check F_j/d^-_j\in V\tp M_\la$ be the regularized singular vector and introduce $C_j(\la)\in \C$ through the equality $\wp_j(u_j)=C_j(\la)v_{\la,j}$.
\begin{corollary}
  The module $V\tp M_\la$ is a direct sum $\op_{j=1}^N M_{j}$ if and only if all $C_j(\la)\not=0$.
\end{corollary}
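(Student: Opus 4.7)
The plan is to reduce the claim to Proposition \ref{W=V} by showing that the invertibility of each projection $\wp_j\colon M_j\to V_j/V_{j-1}$ is captured exactly by the scalar $C_j(\la)$. The key auxiliary input is Theorem \ref{regularization}, which ensures that the regularized singular vector $u_j=\check F_j/\dt_j^-$ is nonzero at every weight $\la\in\h^*$ (its leading component $\check f_{1j}/\dt_j^-$ never vanishes, and for $j=1$ we have $u_1=w_1\tp v_\la$ with $C_1\equiv 1$). Thus $M_j=U_q(\g)u_j$ is a nonzero highest weight submodule of weight $\la+\ve_j$ for every $\la$, with generator $u_j$.

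The next step is to translate the condition ``$\wp_j$ is an isomorphism'' into a statement about $C_j(\la)$. By definition $\wp_j(u_j)=C_j(\la)v_{\la,j}$ modulo $V_{j-1}$, where $v_{\la,j}$ projects to the highest weight vector of the Verma module $V_j/V_{j-1}\simeq M_{\la+\ve_j}$. If $C_j(\la)=0$, then $u_j\in V_{j-1}$, hence $M_j\subset V_{j-1}$ and $\wp_j$ is identically zero, so it cannot be an isomorphism. If $C_j(\la)\not=0$, surjectivity is automatic because $\wp_j(u_j)$ is a nonzero multiple of the cyclic generator of the Verma module $V_j/V_{j-1}$. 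For injectivity I would compose $\wp_j$ with the canonical surjection $\pi\colon M_{\la+\ve_j}\twoheadrightarrow M_j$; the composite is a nonzero homomorphism between two copies of $M_{\la+\ve_j}$ sending highest weight vector to $C_j(\la)$ times highest weight vector, hence an isomorphism by cyclicity of Verma modules. This forces both $\pi$ and $\wp_j$ to be isomorphisms.

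Combining the two cases, $\wp_j$ is an isomorphism if and only if $C_j(\la)\not=0$. Invoking the equivalence i)$\Leftrightarrow$iii)$\Leftrightarrow$iv) in Proposition \ref{W=V} applied at $j=N$ then yields: $V\tp M_\la=\bigoplus_{j=1}^N M_j$ if and only if $C_j(\la)\not=0$ for all $j\in[1,N]$.

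The only delicate point is the injectivity of $\wp_j$ in the nonvanishing case; this is handled by the standard Verma-module cyclicity argument sketched above. The genuinely substantive content — that $u_j$ survives as a nonzero singular vector at every weight, so that $M_j$ is well defined pointwise — is already packaged in Theorem \ref{regularization}, which is why the present corollary admits such a short proof.
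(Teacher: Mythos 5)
Your proof is correct and follows essentially the same route as the paper: both reduce to Proposition \ref{W=V} by noting that, since $u_j\neq 0$ at every $\la$ (Theorem \ref{regularization}), the projection $\wp_j$ is an isomorphism precisely when $C_j(\la)\neq 0$. Your only addition is to spell out the Verma-cyclicity argument showing that $C_j(\la)\neq 0$ forces $\wp_j$ to be an isomorphism, a step the paper delegates to the zero-or-isomorphism dichotomy already recorded at the start of Section 3 and used inside the proof of Proposition \ref{W=V}.
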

\begin{proof}
  Since $u_j$ does not vanish at all $\la$, $C_j(\la)=0$ if and only if $\wp_j$ is an isomorphism.
\end{proof}
Denote $\phi_{ij}=\frac{q^{2\xi_{ij}}-1}{q-\bar q}$ for $i<j$ apart from $i=j'$ for $\g=\s\o(N)$, in which
case we set $\phi_{j'j}=1$ for even $N$ and $\varphi_{j'j}=\frac{q^{\xi_{j'j}}-1}{q-\bar q}$ for odd $N$.
Then $C_j\simeq \prod_{i=1}^{j-1}\phi_{ij}$  up to a numerical factor that never turns zero.
Recall that $q^{2\xi_{ij}}=x_ix_j^{-1}$, where $x_i$ are the eigenvalues of $\Q$.
If they are all distinct, then clearly $V\tp M_\la=\op_{j=1}^N M_{j}$. It follows that the converse is also true
for symplectic $\g$.
For orthogonal $\g$, the spectrum of $\Q$ does not separate the submodules $M_{j}$
at all weights. 
For instance, choose a weight $\la$ such that $q^{2(\la+\rho,\ve_j)}=-1$.
Then $x_jx_{j'}^{-1}=q^{4(\la+\rho,\ve_j)}=1$. If all
other $x_l$  are pairwise distinct and different from $x_j$, then the direct sum decomposition still holds.
This phenomenon facilitated quantization of borderline Levi conjugacy classes
 in \cite{AM3}.

\end{document}